\def\MIHALIS{1} 

\documentclass[reqno,a4paper,12pt]{amsart} 

\usepackage{amssymb,bm,mathrsfs,mathtools,booktabs}

\usepackage{enumerate}
\usepackage{tikz}
\usepackage[centering,width=6in]{geometry}
\usepackage{graphics,verbatim}
\usepackage{graphicx}
\usepackage{csquotes}
\usepackage{enumitem}
\usepackage{thmtools} 
\usepackage{thm-restate}
\usepackage{dsfont}

\ifdefined\MIHALIS  
\usepackage{txfonts,pxfonts,tikz} 
\usepackage{tgschola}
\usepackage[T1]{fontenc}
\usepackage[allcolors=blue,allbordercolors=blue,pdfborderstyle={/S/U/W 1}]{hyperref}
\usepackage[ocgcolorlinks]{ocgx2}
\usepackage{bbm}

\ifdefined\SMART 
\usepackage[paperwidth=12cm,paperheight=24cm,left=5mm,right=5mm,top=10mm,bottom=5mm]{geometry}
\else
\calclayout
\fi

\fi 

\numberwithin{equation}{section}

\newcommand{\R}{{\mathbb R}}
\newcommand{\Z}{{\mathbb Z}}

\newcommand{\N}{{\mathbb N}}

\newcommand{\C}{{\mathcal C}}

\newcommand{\Abs}[1]{{\left|{#1}\right|}}

\newcommand{\Norm}[1]{{\left\|{#1}\right\|}}

\newcommand{\Set}[1]{{\left\{{#1}\right\}}}

\newcommand{\RR}{{\mathbb R}}

\newcommand{\CC}{{\mathbb C}}
\newcommand{\ZZ}{{\mathbb Z}}

\newcommand{\inner}[2]{{\langle #1, #2 \rangle}}

\newcommand{\ft}[1]{\widehat{#1}}

\newcounter{rem}
\newtheorem{theorem}{Theorem}[section]
\newtheorem{lemma}[theorem]{Lemma}
\newtheorem{proposition}[theorem]{Proposition}
\newtheorem{corollary}[theorem]{Corollary}

\newtheorem{problem}[theorem]{Problem}
\theoremstyle{remark}

\newtheorem{example}{Example}[section]
\numberwithin{equation}{section}

 \DeclareMathOperator\supp{\rm supp}

\def\N{\mathbb{N}}
\def\Z{\mathbb{Z}}
\def\R{\mathbb{R}}
\def\C{\mathbb{C}}
\def\orange{\org}
\def\org#1{\textcolor[rgb]{1.0, 0.5, 0.00}{#1}}
\def\blue{\blue}
\def\blue#1{\textcolor[rgb]{0.00,0.00,1.00}{#1}}
\def\red#1{\textcolor[rgb]{1.00,0.00,0.00}{#1}}

\title[Spectrality of factors of product spectral measures]{Spectrality of factors of  product spectral measures}

\author{Mihail N. Kolountzakis}
\address{\href{http://math.uoc.gr/en/index.html}{Department of Mathematics and Applied Mathematics}, University of Crete,\\Voutes Campus, 70013 Heraklion, Greece,\newline and \newline \href{https://ics.forth.gr/}{Institute of Computer Science}, Foundation of Research and Technology Hellas, N. Plastira 100, Vassilika Vouton, 700 13, Heraklion, Greece}
\email{kolount@uoc.gr}

\author{Chun-Kit Lai}
\address[Chun-Kit Lai]{San Francisco State University, San Francisco, USA}
\email{cklai@sfsu.edu}

\author{Kailing Lai}
\address[Kailing Lai]{School of Mathematics and Information Science, Guangzhou University, Guangzhou, 510006, P.~R.~China,\newline and \newline
\href{http://math.uoc.gr/en/index.html}{Department of Mathematics and Applied Mathematics}, University of Crete,\\Voutes Campus, 70013 Heraklion, Greece}
\email{lkailinggl@163.com}

\author[Jinjun Li]{Jinjun Li}

 \address[Jinjun Li]{School of Mathematics and Information Science, Guangzhou University, Guangzhou, 510006, P.~R.~China}
\email{lijinjun@gzhu.edu.cn}

\subjclass[2020]{42C99}
\keywords{ Spectral measures, Product spectral set, Complementary pairs, singular measures}

\begin{document}
\begin{abstract}
We refine the method by Greenfeld and Lev for the product spectral set problem and generalize the theorem to a singular measure setting. Furthermore, we establish a new class of spectral unions of intervals for which the product spectral set question has a positive answer. More precisely,  if $A$ is a subset of the natural numbers such that $A\oplus B = \{0,1,\cdots, N-1\}$ for some $B\subset \N$ and $N>1$ then  the product measure $\mathcal{L}|_{A+[0,1]}\times \nu$ is a spectral measure (that may be singular) if and only if $\nu$ is a spectral measure. 
\end{abstract}

\date{\today}

\maketitle

\tableofcontents

\section{Introduction}

\subsection{Spectral sets and spectral measures.} Let $\Omega\subseteq\mathbb{R}^{d}$ be a bounded, measurable set of positive Lebesgue measure. We say that $\Omega$ is a \textit{spectral set} if there exists a countable set $\Lambda\subseteq\mathbb{R}^{d}$ such that the set of exponential functions
$E(\Lambda):=\{e^{2\pi i\lambda\cdot x}:\lambda\in \Lambda\}$
forms an orthogonal basis for $\textit{L}^{2}(\Omega)$. Such a set $\Lambda$ is called a \textit{spectrum }for $\Omega$. It is well-known that the classical example of a spectral set is the unit cube $[0,1]^{d}$ with a spectrum $\mathbb{Z}^{d}$. The study of spectral sets dates back to Fuglede \cite{F-JFA}, who conjectured that a set $\Omega\subset\mathbb{R}^{d}$ is a spectral set if and only if it tiles $\mathbb{R}^{d}$ by translations, i.e., there is  a discrete set $\mathcal{T}\subset\mathbb{R}^{d}$ such that $\{\Omega+t,t\in\mathcal{T}\}$ constitutes a partition of $\mathbb{R}^{d}$ up to measure zero. Fuglede's conjecture remains open in dimension one and two although it was disproved for three or higher dimensions (\cite{KM-CM}, \cite{KM-FM}, \cite{M-PAMS}, \cite{T-MRL}).

The concept of spectral sets was naturally generalized to spectral measures. {A finite positive Borel measure $\mu$ on $\mathbb{R}^{d}$ is said to be a \textit{spectral measure} if there exists a countable set $\Lambda\subset\mathbb{R}^{d}$, called a \textit{spectrum} of $\mu$, such that the set of exponential functions
$E(\Lambda)=\{e^{2\pi i\langle\lambda, x\rangle}:\lambda\in \Lambda\}$
forms an orthogonal basis for $\textit{L}^{2}(\mu)$.} It is not hard to see that there are purely atomic spectral measures, which has led to the extensive study of the Fuglede's conjecture on finite abelian groups.  The first non-atomic, singular continuous spectral measure was found by Jorgensen and Pedersen \cite{dense}. Denote by $\delta_a$ the Dirac mass at the point $a$.  Jorgensen and Pedersen \cite{dense} proved that {the standard Cantor measure with contraction ratio $1/4$ and digit sets $\{0,2\}$, which can be written as an infinite convolution of normalized Dirac probability measures:
\begin{equation}\label{jdcd}
\nu= \nu_1\ast\nu_2\ast\cdots  \ \textup{where}  \ \nu_n = \frac12(\delta_0+ \delta_{2\cdot4^{-n}})
\end{equation}
is a spectral measure}  with a spectrum
\[
\Lambda=\displaystyle\left\{\sum_{k=0}^{n}4^{k}\ell_{k}:\ell_{k}\in\{0,1\},n\geq0\right\}.
\]
Jorgensen and Pedersen's seminal work opened up a new field in researching the orthogonal harmonic
analysis of fractal measures. Since then, singularly continuous spectral measures have been extensively studied, see \cite{DHL} and the references therein.

\subsection{Product Spectral set problem.} Product spectral set problem can be stated as follows:

\begin{problem}\label{conjecture}
Let $\Omega_1\subset\mathbb{R}^{n}$ and $\Omega_2\subset\mathbb{R}^{m}$ be two bounded, measurable sets of positive Lebesgue measure. Is it true that  $\Omega=\Omega_1\times \Omega_2$ is a spectral set if and only if $\Omega_1$ and $\Omega_2$ are both spectral sets? 
\end{problem}

The problem was first studied by Greenfeld and Lev \cite{GL-JFA} who showed that the problem is true if $\Omega_1 = [0,1]^n$. In \cite{KMN-BHM}, by a different method, Kolountzakis showed that the problem also holds if $\Omega_1\subset\mathbb{R}$ is a union of two intervals. Moreover,  the problem is also true when $\Omega_1$ is a convex polygon in two dimensions by Greenfeld and Lev \cite{GL-JAM}. It was shown to be true if $\Omega_1$ is a convex set \cite{KLM-Signal} after the breakthrough result of Lev and Matolcsi \cite{LM-AM}.  Recently, Chen, Liu and Zheng \cite{CLZ-JGA} proved that the problem is true if
$\Omega_1$ is the classical Sierpinski self-affine tile and the Lebesgue measure of $\Omega_2$ is equal to 1. Ramabadran and Van Vilet \cite{RV-ARX} showed that the problem is true if $\Omega_1$ is a measurable set of measure 1 supported inside $[0,3/2-\varepsilon]$ for some $\varepsilon>0$. 

Using the non-spectral tile in \cite{KM-FM}, Somlai \cite{S-ARX} constructed two bounded measurable sets $\Omega_1, \Omega_2\subset \R^3$  such that $\Omega_1$ is not spectral, but the product set $\Omega=\Omega_1\times \Omega_2$ is a spectral set in $\mathbb{R}^{6}$. Therefore the problem is not true for $n = m \ge 3.$  However, this problem is still open when one of the factors lies in dimension 1 or 2. Indeed, if the problem is false, then one side of the Fuglede's conjecture will be false as well (see \cite{KMN-BHM}). The problem is also unknown if we assume one of the factors is spectral in $\R^1$. We are going to make some new progress under this assumption. 

\subsection{Main Results.} Product spectral set problem can be naturally generalized to product spectral measures. It is direct to prove that if $\mu,\nu$ are spectral measures with their respective spectra $\Lambda_1,\Lambda_2$, then $\Lambda_1\times\Lambda_2$ is a spectrum for $\rho  = \mu\times \nu$, see, for example, \cite[Proposition 2.2]{LW-ACHA}. 
The converse is the only difficult part. Denote the zero set of the Fourier transform of $\mu$ by ${\mathcal Z}(\widehat{\mu})$. 
$$
\mathcal{Z}(\widehat{\mu}) = \{\xi: \widehat{\mu}(\xi) = 0\}.
$$
Our general main result, that will lead to other special cases, is presented below:

\begin{theorem}\label{spectral}
Let $\mu,\nu$ be two finite Borel  measures defined respectively on $\R^{n}$ and $\R^{m}$ and let $\Gamma$ be an {additive} subgroup of $\R^{n}$.  Let $\rho=\mu\times \nu$ be its product measure defined on $\R^{n+m}$. Suppose that  $\mu$ satisfies
\begin{equation}\label{ldcfb}
\mathcal{Z}(\widehat{\mu})\cup\{0\}  \subset  \Gamma.
\end{equation}
Then  \begin{enumerate}
\item if $\rho$ is a spectral measure, then $\rho$ admits a spectrum $\Lambda$
such that
$$
\Lambda \subset \Gamma \times \R^m.
$$
\item If (\ref{ldcfb}) is an equality, then $\nu$ is a spectral measure and $\mu$ is a spectral measure with a spectrum  $\Gamma$.
\end{enumerate}
\end{theorem}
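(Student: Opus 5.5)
The plan is to follow the Greenfeld--Lev coset-splitting idea. First we prove part (1) by translating a given spectrum coset by coset into $\Gamma\times\R^m$. Then we deduce part (2) from a Bessel-inequality squeeze. Throughout, normalize $\mu(\R^n)=\nu(\R^m)=1$ and use the standard criterion: an orthogonal set $\Lambda$ is a spectrum for $\rho$ iff $\sum_{\lambda\in\Lambda}|\widehat\rho(\xi-\lambda)|^2=1$ for all $\xi$.

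\textbf{Part (1): orthogonality of the translated set.} Let $\Lambda$ be a spectrum of $\rho$. Write each $\lambda=(x,y)$ and partition $\Lambda=\bigcup_{c}\Lambda_c$, where $\Lambda_c=\Lambda\cap((c+\Gamma)\times\R^m)$ and $c$ runs over a set of coset representatives of $\R^n/\Gamma$. If $\lambda\in\Lambda_c$ and $\lambda'\in\Lambda_{c'}$ with $c\neq c'$, then $x-x'\notin\Gamma$. By \eqref{ldcfb} this forces $x-x'\neq 0$ and $\widehat\mu(x-x')\neq0$, so orthogonality $\widehat\mu(x-x')\widehat\nu(y-y')=0$ gives $\widehat\nu(y-y')=0$. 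Define
\[
\Lambda'=\bigcup_c\bigl(\Lambda_c-(c,0)\bigr)\subset\Gamma\times\R^m.
\]
Differences inside one $\Lambda_c$ are unchanged. Across classes the $y$-difference is unchanged, so $\widehat\nu$ still vanishes there. Hence $\Lambda'$ is orthogonal. No two points collide, since a collision would force $y=y'$ and then $\widehat\nu(0)=0$, which is absurd.

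\textbf{Part (1): completeness, the expected main obstacle.} We must show $\sum_c\Psi_c(s+c,t)\equiv1$, where
\[
\Psi_c(s,t)=\sum_{(x,y)\in\Lambda_c}|\widehat\mu(s-x)|^2|\widehat\nu(t-y)|^2,
\]
knowing only that $\sum_c\Psi_c(s,t)\equiv1$. Orthogonality of $\Lambda'$ already gives ``$\le1$'' by Bessel. What I would try first is to exploit that the $y$-sets $Y_c$ of different classes are mutually $\nu$-orthogonal. One route is to replace the shift of class $c$ by the multiplication operator $f\mapsto e^{2\pi i\langle c,x\rangle}f$, which is unitary on $L^2(\rho)$, and show that the closed spans of the $E(\Lambda_c)$ (or, after fixing $t$, suitable $\nu$-components) form mutually orthogonal pieces whose direct sum is preserved. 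That would let the per-class unitaries assemble into a single unitary carrying $E(\Lambda)$ onto $E(\Lambda')$. If this fails, the fallback is an averaging or limiting argument in $s$ that compares $\sum_c\Psi_c(s+c,t)$ with the identity $\sum_c\Psi_c(s,t)=1$ class by class.

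\textbf{Part (2).} Assume equality in \eqref{ldcfb}. Then $\widehat\mu$ vanishes on $\Gamma\setminus\{0\}$, so $E(\Gamma)$ is orthogonal in $L^2(\mu)$. By part (1) take a spectrum $\Lambda\subset\Gamma\times\R^m$ and write $\Lambda=\bigcup_{\gamma\in\Gamma}\{\gamma\}\times Y_\gamma$. Points sharing the same $\gamma$ must satisfy $\widehat\nu(y-y')=0$, so each $Y_\gamma$ is $\nu$-orthogonal. Set $a_\gamma(s)=|\widehat\mu(s-\gamma)|^2$ and $\Phi_\gamma(t)=\sum_{y\in Y_\gamma}|\widehat\nu(t-y)|^2$. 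Bessel gives $\Phi_\gamma\le1$ and $\sum_\gamma a_\gamma(s)\le1$, so
\[
1=\sum_\gamma a_\gamma(s)\Phi_\gamma(t)\le\sum_\gamma a_\gamma(s)\le1 .
\]
Hence $\sum_\gamma a_\gamma\equiv1$, and so $\Gamma$ is a spectrum for $\mu$. Moreover $\Phi_\gamma(t)=1$ whenever $a_\gamma(s)>0$. Pick $\gamma$ with $Y_\gamma\neq\emptyset$ and take $s=\gamma$, so that $a_\gamma(\gamma)=1$. This gives $\Phi_\gamma\equiv1$, i.e.\ $Y_\gamma$ is a spectrum for $\nu$.
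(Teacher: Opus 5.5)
Your orthogonality argument for $\Lambda'=\bigcup_c(\Lambda_c-(c,0))$ is correct, and your part (2) is the paper's Lemma~\ref{spectrum} argument. Part (1), however, has a genuine gap. Completeness of $E(\Lambda')$ is the whole content of part (1), and you only obtain the Bessel bound $\sum_c\Psi_c(s+c,t)\le 1$.

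Route (a) restates the goal rather than proving it. Let $H_c=\overline{\mathrm{span}}\,E(\Lambda_c)$ and let $M_{-c}$ be multiplication by $e^{-2\pi i\langle c,x\rangle}$. Saying that the per-class unitaries ``assemble into a single unitary'' means precisely $\bigoplus_c M_{-c}H_c=L^2(\rho)$. Your computation gives $M_{-c}H_c\perp H_{c'}$ for $c'\neq c$, and therefore only $M_{-c}H_c\subseteq H_c$. A unitary can map a closed subspace properly into itself (e.g.\ multiplication by $z$ on the Hardy space inside $L^2$ of the circle), so this one-sided information cannot yield equality. The ``averaging in $s$'' fallback is not specified, so it cannot be assessed.

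The missing idea is the two-sided translation trick of Greenfeld and Lev (Lemma~\ref{3or} in the paper). Your cross-class computation already shows that, for each class $c$, \emph{both} $(\Lambda\setminus\Lambda_c)\cup(\Lambda_c+(c,0))$ and $(\Lambda\setminus\Lambda_c)\cup(\Lambda_c-(c,0))$ are orthogonal and collision-free. Compare Bessel for these two sets with $\sum_{c'}\Psi_{c'}\equiv 1$. This gives $\Psi_c(s-c,t)\le\Psi_c(s,t)$ and $\Psi_c(s+c,t)\le\Psi_c(s,t)$ for all $(s,t)$. Hence $\Psi_c$ is $c$-periodic in $s$, and so $\sum_c\Psi_c(s+c,t)=\sum_c\Psi_c(s,t)=1$. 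In operator terms, you also get $M_{c}H_c\subseteq H_c$, so $M_{-c}H_c=H_c$.

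With this step inserted, your simultaneous shift of all classes is a complete proof, and it is shorter than the paper's. The paper applies the same lemma to the maps $\phi_{\pm t}$, which move all points outside $\Gamma\times\R^m$ at once. It composes these maps to bring the classes into $\Gamma\times\R^m$ one at a time. It then needs a limiting argument (the tail sums over $B_m$ tend to $0$) to assemble $\Lambda'$.
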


The proof of this theorem is a refinement of the proof by Greenfeld and Lev \cite{GL-JFA} and it generalizes their main theorem in two ways. First, none of the measures are assumed to be Lebesgue measure supported on a set and they can even be singular measures of fractal type. Second, (\ref{ldcfb}) is assumed only to be contained in an additive subgroup and this subgroup does not even need to have any topological structure, while their original proof was only concerned with $\mu$ being the Lebesgue measure on an interval which means that (\ref{ldcfb}) is an equality.  One of the key ingredients for the proof in \cite{GL-JFA} used the fact that weak limit of spectra of a measurable set of finite positive measure preserves to be a spectrum. We will see that this is not the case for singular measures in Section \ref{3.1}. We will bypass the weak convergence problem by carefully exploiting the structure of the spectrum under piecewise local translations. 

Let us now discuss some immediate corollaries of Theorem \ref{spectral} (2). First,   by an induction on $k$, we have  

\begin{corollary}\label{cor-spectral}
    Let $\rho = \mu\times \nu$ where $\mu = \mu_1\times\cdots\times\mu_k$ and for each $i=1,\cdots,k$, ${\mathcal Z}(\widehat{\mu_i})\cup\{0\}$ is an {additive} subgroup in $\R^{d_i}$. Then $\rho$ is spectral in $\R^{d_1+\cdots+d_k+d}$ implies that $\nu$ is also spectral in $\R^{d}$. 
\end{corollary}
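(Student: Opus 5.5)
The plan is to reduce the corollary to Theorem \ref{spectral}\,(2) by peeling off one factor $\mu_i$ at a time, inducting on $k$. The two facts needed at each step are:
\begin{itemize}
\item the hypothesis of Theorem \ref{spectral}\,(2) holds for each $\mu_i$ individually, with $\Gamma_i := \mathcal{Z}(\widehat{\mu_i})\cup\{0\}$;
\item the product measure can be regrouped as $\rho=\mu_1\times(\mu_2\times\cdots\times\mu_k\times\nu)$.
\end{itemize}

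\textbf{Base case $k=1$.} Here $\rho=\mu_1\times\nu$ with $\mu_1$ a finite Borel measure on $\R^{d_1}$. By assumption $\Gamma_1 := \mathcal{Z}(\widehat{\mu_1})\cup\{0\}$ is an additive subgroup of $\R^{d_1}$. Condition (\ref{ldcfb}) therefore holds with equality for $\Gamma=\Gamma_1$. Theorem \ref{spectral}\,(2) then gives directly that $\nu$ is spectral, and in addition that $\mu_1$ is spectral with spectrum $\Gamma_1$.

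\textbf{Inductive step.} Assume the statement holds for $k-1$ factors, and let $\rho=\mu_1\times\cdots\times\mu_k\times\nu$ be spectral.

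\begin{enumerate}
\item Set $\nu' := \mu_2\times\cdots\times\mu_k\times\nu$, a finite Borel measure on $\R^{d_2+\cdots+d_k+d}$.
\item Identify $\rho$ with $\mu_1\times\nu'$. This uses associativity of product measures, i.e.\ the canonical identification $\R^{d_1+\cdots+d_k+d}=\R^{d_1}\times\R^{d_2+\cdots+d_k+d}$. Spectrality is preserved under this identification because the coordinates are only grouped, not permuted in any nontrivial way. Even a permutation of coordinates would be harmless: it is a linear isometry, and it maps a spectrum $\Lambda$ to its image.
\item Apply the base case to $\mu_1\times\nu'$. Since $\mathcal{Z}(\widehat{\mu_1})\cup\{0\}$ is a group, $\nu'$ is spectral.
\item Now $\nu'=(\mu_2\times\cdots\times\mu_k)\times\nu$ is a spectral product of $k-1$ factors, each of which satisfies the group hypothesis. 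The induction hypothesis yields that $\nu$ is spectral.
\end{enumerate}

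\textbf{Where the work lies.} There is no real obstacle beyond Theorem \ref{spectral}, which carries all the analytic content. The only point to verify is that the theorem is applied with $\nu$ replaced by an arbitrary finite Borel measure $\nu'$, here itself a product. The theorem places no restriction on $\nu$ beyond finiteness, and $\nu'$ is finite as a product of finite measures.

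A possible alternative is to apply Theorem \ref{spectral} once, to $\mu=\mu_1\times\cdots\times\mu_k$. This would need $\mathcal{Z}(\widehat{\mu})\cup\{0\}$ to be a group, and it is not in general, since $\widehat{\mu}(\xi)=\prod_i\widehat{\mu_i}(\xi_i)$ has zero set a union of cylinders. For that reason the induction route is the one to take.
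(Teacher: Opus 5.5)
Your proposal is correct and matches the paper's approach. The paper obtains this corollary by induction on $k$, applying Theorem \ref{spectral}\,(2) to one factor $\mu_i$ at a time (its zero set together with $0$ is exactly a group), which is your regrouping $\rho=\mu_1\times(\mu_2\times\cdots\times\mu_k\times\nu)$. Your remark that $\mathcal{Z}(\widehat{\mu_1\times\cdots\times\mu_k})\cup\{0\}$ is generally not a group is a correct explanation of why the factors must be peeled off one at a time.
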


 Denote by ${\mathcal L}|_{\Omega}$ the  Lebesgue measure on a measurable set $\Omega$. Since ${\mathcal Z}(\widehat{\mathcal{L}|_{[0,1]}}) = \Z\setminus\{0\}$, we have 
\begin{corollary}\label{interval}
Let $\nu$ be a {finite} Borel measure on $\R^m$ with compact support. Then  $\mathcal{L}|_{[0,1]^d}\times \nu$ is a spectral measure on $\R^{d+m}$ if and only if $\nu$ is a spectral measure on $\R^m.$
\end{corollary}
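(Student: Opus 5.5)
Both directions reduce to results already stated. The ``if'' direction is the elementary fact quoted before Theorem \ref{spectral}: if $\Lambda_2$ is a spectrum of $\nu$, then $\Z^d\times\Lambda_2$ is a spectrum of $\mathcal{L}|_{[0,1]^d}\times\nu$. This uses that $\Z^d$ is a spectrum of $\mathcal{L}|_{[0,1]^d}$ and \cite[Proposition 2.2]{LW-ACHA}. Concretely, orthogonality of $e_{(k,\lambda)}$ follows because the inner product factors as $\widehat{\mathcal L|_{[0,1]^d}}(k-k')\,\widehat{\nu}(\lambda-\lambda')$. Completeness follows because tensor products of orthonormal bases of $L^2(\mathcal{L}|_{[0,1]^d})$ and $L^2(\nu)$ form an orthonormal basis of $L^2$ of the product measure.

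For the ``only if'' direction, we apply Corollary \ref{cor-spectral} with $k=d$, $\mu_i=\mathcal{L}|_{[0,1]}$ for each $i$, and $d_i=1$. Then $\mu_1\times\cdots\times\mu_d=\mathcal{L}|_{[0,1]^d}$, because Lebesgue measure on the cube is the product of the one-dimensional Lebesgue measures on $[0,1]$. The hypothesis to check is that $\mathcal{Z}(\widehat{\mu_i})\cup\{0\}$ is an additive subgroup of $\R$. This follows from the explicit computation
$$\widehat{\mathcal{L}|_{[0,1]}}(\xi)=\int_0^1 e^{-2\pi i \xi x}\,dx = e^{-\pi i\xi}\,\frac{\sin(\pi \xi)}{\pi\xi}\quad(\xi\neq 0),\qquad \widehat{\mathcal{L}|_{[0,1]}}(0)=1.$$
The zero set is therefore exactly $\Z\setminus\{0\}$, so $\mathcal{Z}\cup\{0\}=\Z$, which is a subgroup. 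Corollary \ref{cor-spectral} then gives that $\nu$ is spectral whenever $\mathcal{L}|_{[0,1]^d}\times\nu$ is.

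If Corollary \ref{cor-spectral} is to be justified rather than quoted, I would run the induction on $k$ directly. Write $\rho=\mathcal{L}|_{[0,1]}\times\big(\mathcal{L}|_{[0,1]^{d-1}}\times\nu\big)$ and apply Theorem \ref{spectral}(2) with $\Gamma=\Z$; here (\ref{ldcfb}) holds with equality. The conclusion is that $\mathcal{L}|_{[0,1]^{d-1}}\times\nu$ is spectral. Iterating $d$ times strips off one interval factor at each step and ends with $\nu$ spectral.

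The only point needing care is that the hypotheses of Theorem \ref{spectral} are met at each step. Each factor must be a finite Borel measure: $\nu$ is finite by assumption, and products of finite measures with compact support stay finite. The group condition must be an exact equality, which holds for $\mathcal{L}|_{[0,1]}$ with $\Gamma=\Z$. There is no real obstacle here; all the substance lies in Theorem \ref{spectral}(2).
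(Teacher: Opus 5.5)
Your proposal is correct and takes the same route as the paper. For the \emph{if} direction you use the product-of-spectra fact \cite[Proposition 2.2]{LW-ACHA} with $\Z^d$ as a spectrum of the cube; for the \emph{only if} direction you use Corollary~\ref{cor-spectral}, equivalently $d$ applications of Theorem~\ref{spectral}(2) with $\Gamma=\Z$, justified by $\mathcal{Z}(\widehat{\mathcal{L}|_{[0,1]}})=\Z\setminus\{0\}$.
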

This generalizes the result of Greenfeld and Lev from spectral sets to spectral measures in the second component.  Moreover, this also immediately recovers the result by Chen, Liu and Zheng \cite{CLZ-JGA} since the zero of the Fourier transform of the classical Sierpinski self-affine tiles is equal to $\Z^2\setminus\{0\}$, which was computed on \cite[p.26]{CLZ-JGA}. 

\medskip

We say that $(A,B)$ is a {\bf complementary pair} for $N$ if 
$$
A\oplus B = \{0,1,\cdots, N-1\}
$$
under the addition of integers and every integer in $\{0,1,\cdots, N-1\}$ can be written uniquely as $a+b$ for $a\in A$ and $b\in B$. The structure of $A$ and $B$ has been completely classified by C. Long in 1967 \cite{Long} . It is also well-known that $A+[0,1]$ is a spectral set (see also \cite{GL-2014,PW}).  Using  Theorem \ref{spectral}, we will prove the following theorem for the product spectral measures. 
\begin{theorem}\label{product-tiling interval-intro}
    Let $\mu$ be the Lebesgue measure for the set $A+[0,1]$ where $A$ forms a complementary pair for $N$ with some subset $B$. Suppose that $\mu\times \nu$ is spectral {in $\R^{1+m}$}. Then $\nu$ is spectral in $\R^{m}$.
\end{theorem}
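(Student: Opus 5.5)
The plan is to apply Theorem \ref{spectral}(1) to a suitable subgroup. Then use the tiling structure $A\oplus B=\{0,\dots,N-1\}$ to pass from $\mu\times\nu$ to $\mathcal{L}|_{[0,N]}\times\nu$, which is handled by Corollary \ref{interval} after rescaling.

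\textbf{Step 1: locating the zeros of $\widehat\mu$.} Write $\mu=\mathcal L|_{[0,1]}*\delta_A$ with $\delta_A=\sum_{a\in A}\delta_a$. Then
\[
\widehat\mu(\xi)=\widehat{\mathcal L|_{[0,1]}}(\xi)\,A(e^{-2\pi i\xi}),\qquad A(x)=\sum_{a\in A}x^a .
\]
The complementary pair condition gives $A(x)B(x)=1+x+\cdots+x^{N-1}$. Hence every zero of $A(e^{-2\pi i\xi})$ is a nontrivial $N$-th root of unity, that is, $\xi\in\frac1N\Z\setminus\Z$. The zeros of the interval factor are exactly $\Z\setminus\{0\}$. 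Therefore
\[
\mathcal Z(\widehat\mu)\cup\{0\}\subset\Gamma:=\tfrac1N\Z .
\]
In general this inclusion is strict, so only part (1) of Theorem \ref{spectral} applies. It yields a spectrum $\Lambda\subset\frac1N\Z\times\R^m$ for $\rho=\mu\times\nu$.

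\textbf{Step 2: enlarging the first factor to an interval.} Note that $\mu*\delta_B=\mathcal L|_{[0,N]}$. By Long's classification, both members of a complementary pair satisfy the Coven--Meyerowitz conditions, so $B$ is spectral in $\Z_N$. This gives a set $S\subset\frac1N\Z$ of $|B|$ frequencies forming a spectrum for $\delta_B$. The goal is to show that
\[
\Lambda'=\Lambda+(S\times\{0\})
\]
is a spectrum for $\mathcal L|_{[0,N]}\times\nu$, whose Fourier transform is $\widehat\mu(\xi)\widehat{\delta_B}(\xi)\widehat\nu(\eta)$. Once this holds, rescaling the first coordinate by $N$ and applying Corollary \ref{interval} with $d=1$ shows that $\nu$ is spectral.

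For completeness I would use the Parseval-type identity $\sum_{\lambda\in\Lambda}|\widehat\rho(z-\lambda)|^2=\rho(\R^{1+m})^2$, which characterizes spectra. Summing over $s\in S$, the factor $\sum_{s}|\widehat{\delta_B}(\xi-s)|^2$ equals $|B|^2$. The reason is that $\widehat{\delta_B}$ is $1$-periodic and $S$ is a spectrum for $\delta_B$, so this identity holds for every $\xi$ in the set $S'$ described below.

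\textbf{The main obstacle} is orthogonality of $\Lambda'$, and how to choose $S$ compatibly with $\Lambda$. For two points $\lambda+s\neq\lambda'+s'$ we need
\[
\widehat\mu\,\widehat{\delta_B}\,\widehat\nu\bigl((\lambda-\lambda')+(s-s')\bigr)=0 .
\]
The difference of first coordinates is controlled only modulo $\frac1N\Z$, not modulo $1$, so the two factors can interfere. My first attempt is to exploit the fact that all first coordinates lie in $\frac1N\Z$. This reduces everything to finite combinatorics on $\Z_N$: $\widehat{\delta_B}$ vanishes at $j/N$ precisely when the root of unity $e^{-2\pi i j/N}$ is a zero of $B(x)$, and $\widehat{\delta_A}$ vanishes at the complementary set of nontrivial $N$-th roots of unity. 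The plan is then to choose $S$ as a spectrum built from the cyclotomic divisors of $B$ (the Coven--Meyerowitz construction), replacing the sum by a tensor-like splitting of $\frac1N\Z/\Z$. If the differences do not split cleanly, the fallback is to redo the Greenfeld--Lev local-translation argument, as in the proof of Theorem \ref{spectral}, directly for the $N$-periodic structure. Whichever route is used, I expect this verification to be the hardest part of the proof.
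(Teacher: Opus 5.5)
Your overall plan matches the paper's. Theorem \ref{spectral}(1) gives a spectrum $\Lambda\subset\frac1N\Z\times\R^m$ of $\rho=\mu\times\nu$, and $\Lambda+(S\times\{0\})$ is then shown to be a spectrum of $\eta=\mathcal{L}|_{[0,N]}\times\nu=\delta_{B\times\{0\}}*\rho$. But both verifications that make up the actual proof are missing or wrong.

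\textbf{Orthogonality.} You leave this open, but it needs no compatible choice of $S$, no Coven--Meyerowitz splitting and no rerun of Greenfeld--Lev. It follows in a few lines from facts you already have. The function $\widehat{\mathcal{L}|_{[0,N]}}$ vanishes on all of $\frac1N\Z\setminus\{0\}$, and all first coordinates lie in $\frac1N\Z$. So for $\lambda+(s,0)\neq\lambda'+(s',0)$ the only case to check is $\lambda_1-\lambda_1'=s'-s$.
\begin{enumerate}
\item If $s=s'$, then $\lambda\neq\lambda'$ share their first coordinate. Orthogonality of $\Lambda$ and $\widehat\mu(0)\neq0$ give $\widehat\nu(\lambda_2-\lambda_2')=0$.
\item If $s\neq s'$, then $\lambda_1-\lambda_1'\in\mathcal Z(\widehat{\delta_B})\subset\frac1N\Z\setminus\Z$. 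Since $A(x)B(x)=1+x+\cdots+x^{N-1}$ has simple roots, $\mathcal Z(\widehat{\delta_B})$ is disjoint from $\mathcal Z(\widehat\mu)\cup\{0\}=\mathcal Z(\widehat{\delta_A})\cup\Z$. Hence $\widehat\mu(\lambda_1-\lambda_1')\neq0$, and again $\widehat\nu(\lambda_2-\lambda_2')=0$.
\end{enumerate}
In both cases $\widehat\eta$ vanishes at the difference. The same reasoning shows that the sum $\Lambda+S$ is direct. So the ``interference'' you feared cannot occur, for any spectrum $S\subset\frac1N\Z$ of $\delta_B$.

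\textbf{Completeness.} Your argument does not work as written. Write $z=(\xi,\zeta)$ and consider
\[
\sum_{\lambda\in\Lambda}\sum_{s\in S}\Abs{\widehat{\delta_B}(\xi-\lambda_1-s)}^2\,\Abs{\widehat\rho\,(z-\lambda-(s,0))}^2 .
\]
The $\widehat{\delta_B}$ factor depends on $\lambda_1$ modulo $1$, and $\lambda_1$ is only known to lie in $\frac1N\Z$. So $1$-periodicity does not let you pull out $\sum_s\Abs{\widehat{\delta_B}(\xi-s)}^2=\Abs{B}^2$; that would require $\Lambda\subset\Z\times\R^m$. Also, the set $S'$ you appeal to is never defined.

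What is missing is the packing structure. The sets $A+b+[0,1]$, $b\in B$, overlap only at endpoints, so $L^2(\eta)$ is the orthogonal sum of the spaces $L^2(\rho*\delta_{(b,0)})$. Each $\Lambda+(s,0)$ is a spectrum for every translate $\rho*\delta_{(b,0)}$. Hence for $f$ supported on one translate, using $\Abs{S}=\Abs{B}$,
\[
\sum_{\lambda,s}\Abs{\inner{f}{e_{\lambda+(s,0)}}_{L^2(\eta)}}^2=\Abs{S}\,\Norm{\rho}\,\Norm{f}_{L^2(\eta)}^2=\Norm{\eta}\,\Norm{f}_{L^2(\eta)}^2 .
\]
Equality in Bessel's inequality for the orthogonal system then puts $f$ in its closed span, and linearity finishes. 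This is exactly Lemma \ref{lm:conv}.

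Finally, Corollary \ref{interval} assumes $\nu$ is compactly supported, which is not given here. Apply Theorem \ref{spectral}(2) with $\Gamma=\frac1N\Z=\mathcal Z(\widehat{\mathcal{L}|_{[0,N]}})\cup\{0\}$ instead, as the paper does.
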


A simple example of complementary pair  is $A = \{0,2\}$ and $B = \{0,1\}$, so product spectral set problem (Problem \ref{conjecture}) is true if one of the factors is the Lebesgue measure supported on $[0,1]\cup [2,3]$. The above theorem asserts a wide class of finite unions of intervals also satisfies the product spectral set problem. This is new even in the ordinary product spectral set problem.

\subsection{Notations and organization.} In this paper, we will  write 
$$
e_{\lambda}(x) = e^{2\pi i \lambda\cdot x}
$$
{For a finite positive Borel measure $\tau $, we denote by $\Norm{\tau }:=\tau(\R^n)$ its total measure in $\R^n$. Then the exponential functions $\{e_{\lambda}:\lambda\in \Lambda\}$ is an orthogonal basis for $L^2(\tau)$ if and only if it is mutually orthogonal and the Parseval identity of the following form holds:}
$$
\sum_{\lambda\in\Lambda}|\langle f,e_{\lambda}\rangle_{L^2(\tau)}|^2=\Norm{\tau}\cdot\|f\|_{L^2(\tau)}^2.
$$
A basic criterion for determining whether $\Lambda\subset\mathbb{R}^{d}$ is a spectrum for $\mu$ was given by Jorgensen and Pedersen \cite{dense} and will be used throughout the paper. In the original version, $\mu$ was a probability measure,  but it is direct to see that it holds in the following form by normalizing the total mass. 
\begin{theorem}[\cite{dense}]\label{Q}
Let $\mu$ be a finite Borel measure with compact support in $\mathbb{R}^{d}$, $\Lambda$ be a countable subset of $\mathbb{R}^{d}$. Let
\[
Q(\xi):=\sum_{\lambda\in \Lambda}|\hat{\mu}(\xi+\lambda)|^{2}.
\]
 Then
\begin{enumerate}
  \item [\textup{(1)}] $E(\Lambda)$ is an orthogonal set of $L^{2}(\mu)$ if and only if $Q(\xi)\leq \Norm{\mu}^2$ for $\xi \in \mathbb{R}^{d}$. In this case, $Q(\xi)$ has an entire analytic extension to $\C^d$.
\item [\textup{(2)}] $E(\Lambda)$ is an orthogonal basis for $L^{2}(\mu)$ if and only if $Q(\xi)\equiv \Norm{\mu}^2$ for $\xi \in \mathbb{R}^{d}$.
\end{enumerate}
\end{theorem}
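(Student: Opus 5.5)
The plan is to read $Q(\xi)$ as a Bessel sum for the single exponential $e_{-\xi}$ (or $e_{\xi}$, depending on the sign convention for $\widehat{\mu}$), and then use the standard Hilbert-space facts about orthogonal systems. With $\widehat{\mu}(\xi)=\int e^{-2\pi i \xi\cdot x}\,d\mu(x)$ we have
$$
\langle e_{-\xi}, e_\lambda\rangle_{L^2(\mu)} = \int e^{-2\pi i(\xi+\lambda)\cdot x}\,d\mu(x)=\widehat{\mu}(\xi+\lambda).
$$
Hence $Q(\xi)=\sum_{\lambda\in\Lambda}|\langle e_{-\xi},e_\lambda\rangle|^2$. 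Every exponential satisfies $\|e_\lambda\|^2_{L^2(\mu)}=\Norm{\mu}$, so $\{\Norm{\mu}^{-1/2}e_\lambda\}$ is orthonormal exactly when $E(\Lambda)$ is orthogonal.

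For (1), suppose first that $E(\Lambda)$ is orthogonal. Bessel's inequality for the normalized system gives $\sum_\lambda |\langle f,e_\lambda\rangle|^2\le \Norm{\mu}\,\|f\|^2$ for every $f\in L^2(\mu)$. Taking $f=e_{-\xi}$, whose norm squared is $\Norm{\mu}$, yields $Q(\xi)\le\Norm{\mu}^2$.

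Conversely, suppose $Q\le\Norm{\mu}^2$ and fix $\lambda_0\in\Lambda$. At $\xi=-\lambda_0$ the $\lambda_0$ term equals $|\widehat{\mu}(0)|^2=\Norm{\mu}^2$, so every other term must vanish. That is, $\widehat{\mu}(\lambda-\lambda_0)=0$ for all $\lambda\neq\lambda_0$, which is precisely orthogonality.

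For the entire extension, write each term as $\widehat{\mu}(z+\lambda)\,\overline{\widehat{\mu}(\bar z+\lambda)}$. This is entire in $z\in\C^d$ because $\mu$ has compact support. To get local uniform convergence, note that $\widehat{\mu}(z+\lambda)=\langle g_z,e_\lambda\rangle$ with $g_z(x)=e^{-2\pi i z\cdot x}$. Since $\mu$ has compact support, $g_z$ is bounded on $\operatorname{supp}\mu$, uniformly for $z$ in compact sets. Bessel then bounds $\sum_\lambda|\widehat{\mu}(z+\lambda)|^2\le \Norm{\mu}\|g_z\|^2$ locally uniformly. Applying Cauchy--Schwarz to the tails, together with Weierstrass's theorem, gives an entire limit.

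For (2), if $E(\Lambda)$ is an orthogonal basis, then Parseval applied to $f=e_{-\xi}$ gives $Q\equiv\Norm{\mu}^2$. Conversely, if $Q\equiv\Norm{\mu}^2$, then part (1) already gives orthogonality. Parseval for $e_{-\xi}$ says that the orthogonal projection $P$ onto $\overline{\operatorname{span}}\,E(\Lambda)$ satisfies $\|Pe_{-\xi}\|=\|e_{-\xi}\|$, so $e_{-\xi}$ lies in this closed span for every $\xi$.

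The remaining step, and the only place needing care, is density of $\operatorname{span}\{e_\xi:\xi\in\R^d\}$ in $L^2(\mu)$. Because $\mu$ is a finite measure with compact support $K$, Stone--Weierstrass shows that trigonometric polynomials are uniformly dense in $C(K)$, and $C(K)$ is dense in $L^2(\mu)$. Therefore the closed span is all of $L^2(\mu)$, and $E(\Lambda)$ is complete.
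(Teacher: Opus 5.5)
Your proof is correct, and it takes a different, more self-contained route than the paper. The paper does not prove this statement. It quotes Jorgensen--Pedersen's lemma for probability measures and remarks that the finite-mass version follows by normalization: replacing $\mu$ by $\mu/\Norm{\mu}$ leaves orthogonality and completeness of $E(\Lambda)$ unchanged and divides $Q$ by $\Norm{\mu}^2$.

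You instead prove the criterion from scratch. You read $Q(\xi)$ as $\sum_{\lambda}\Abs{\inner{e_{-\xi}}{e_\lambda}}^2$ and then use Bessel, Parseval, the evaluation at $\xi=-\lambda_0$, and Stone--Weierstrass density of exponentials in $L^2(\mu)$. Because you carry the factor $\Norm{\mu}=\Norm{e_\lambda}^2$ throughout, no normalization step is needed. This is essentially the standard argument behind the cited lemma. The paper's route buys brevity; yours makes the statement checkable within the paper itself.

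One step needs an extra line: the entire extension. Bessel applied to $g_z$ gives only a locally uniform \emph{bound} on $\sum_\lambda\Abs{\widehat{\mu}(z+\lambda)}^2$. Cauchy--Schwarz on the tails plus Weierstrass's theorem needs more: the tails $\sum_{\lambda\notin F}\Abs{\widehat{\mu}(z+\lambda)}^2$ must be \emph{uniformly small} on each compact set $C\subset\mathbb{C}^d$. There are two ways to get this.
\begin{enumerate}
\item The map $z\mapsto g_z$ is continuous into $L^2(\mu)$, since $g_w\to g_z$ uniformly on the compact support, so $\{g_z: z\in C\}$ is compact. The tail functionals $g\mapsto\bigl(\sum_{\lambda\notin F}\Abs{\inner{g}{e_\lambda}}^2\bigr)^{1/2}$ are all $\Norm{\mu}^{1/2}$-Lipschitz and decrease to $0$ pointwise as $F$ exhausts $\Lambda$. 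Hence they tend to $0$ uniformly on compact sets.
\item The partial sums are locally uniformly bounded holomorphic functions. They converge pointwise on all of $\mathbb{C}^d$, since the series is absolutely convergent at each $z$ by your Cauchy--Schwarz bound. Montel/Vitali then upgrades this to locally uniform convergence.
\end{enumerate}
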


We will prove Theorem \ref{spectral} in Section \ref{sec2} and Theorem \ref{product-tiling interval-intro} in Section \ref{sec3}. Finally, we will give some examples and the examples that fail the weak convergence in Section \ref{sec4}.

\section{The Proof of Theorem \ref{spectral}}\label{sec2}

\subsection{Setup of the proof. } Throughout the section, we let
\begin{equation}\label{measure}
\rho = \mu\times \nu    
\end{equation}
be a finite Borel measure in $\R^{n+m}$ where $\mu$ and $\nu$ are respectively the finite Borel measures in  and $\R^n$ and $\R^m$. Moreover, ${\mathcal Z}(\widehat{\mu})\cup\{0\}$ is contained in an {additive} subgroup of $\R^n$, which we will denote by $\Gamma$.  

Assume that $\Lambda_{1}$ and $\Lambda_{2}$ are two discrete sets in $\mathbb{R}^{n+m}$  and $\Lambda_{2}$, $\Lambda_{2}+\gamma$ and $\Lambda_{2}-\gamma$ are disjoint from $\Lambda_{1}$, where $\gamma$ is a vector in $\mathbb{R}^{n+m}$.
Define
\begin{equation}\label{partition}
\Lambda:=\Lambda_{1}\cup \Lambda_{2},\ \ \ \Lambda^{+}_\gamma:=\Lambda_{1}\cup (\Lambda_{2}+\gamma),\ \ \ \Lambda^{-}_\gamma:=\Lambda_{1}\cup (\Lambda_{2}-\gamma).
\end{equation}
We begin by recalling a simple lemma  in \cite[Lemma 4.1]{GL-JFA}. 

\begin{lemma}\label{3or}Let $\Lambda$, $\Lambda^{+}_\gamma$ and $\Lambda^{-}_\gamma$ be the sets defined in $\eqref{partition}$, $\rho=\mu\times\nu$ be the measure defined in $\eqref{measure}$.
Suppose that $E(\Lambda)$ is an orthogonal basis for $L^2(\rho)$, and both $E(\Lambda^{+}_\gamma)$ and $E(\Lambda^{-}_\gamma)$ are orthogonal sets of $L^2(\rho)$, then each one of the sets $E(\Lambda^{+}_\gamma)$ and $E(\Lambda^{-}_\gamma)$ is an orthogonal basis for $L^2(\rho)$.
\end{lemma}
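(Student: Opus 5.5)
We will use the Jorgensen--Pedersen criterion, Theorem~\ref{Q}, which converts the basis property into an identity for a function of $\xi$. For a countable set $S\subset\R^{n+m}$ write
\[
Q_S(\xi)=\sum_{\lambda\in S}|\widehat{\rho}(\xi+\lambda)|^2 .
\]
Since $\Lambda_1$ is disjoint from $\Lambda_2$, $\Lambda_2+\gamma$ and $\Lambda_2-\gamma$, each of the three unions in (\ref{partition}) is disjoint. Hence
\[
Q_{\Lambda}=Q_{\Lambda_1}+Q_{\Lambda_2},\qquad Q_{\Lambda^{\pm}_\gamma}(\xi)=Q_{\Lambda_1}(\xi)+Q_{\Lambda_2}(\xi\pm\gamma).
\]
By hypothesis and Theorem~\ref{Q}(2), $Q_{\Lambda}\equiv\Norm{\rho}^2$. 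By Theorem~\ref{Q}(1), $Q_{\Lambda^{\pm}_\gamma}\le\Norm{\rho}^2$ on $\R^{n+m}$. Substituting $Q_{\Lambda_1}=\Norm{\rho}^2-Q_{\Lambda_2}$, these two inequalities become
\[
Q_{\Lambda_2}(\xi+\gamma)\le Q_{\Lambda_2}(\xi),\qquad Q_{\Lambda_2}(\xi-\gamma)\le Q_{\Lambda_2}(\xi)\qquad\text{for all }\xi .
\]

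Put $g=Q_{\Lambda_2}$. Replacing $\xi$ by $\xi+\gamma$ in the second inequality gives $g(\xi)\le g(\xi+\gamma)$, and combined with the first this yields $g(\xi+\gamma)=g(\xi)$ for every $\xi$. Feeding this back, $Q_{\Lambda^{\pm}_\gamma}=Q_{\Lambda_1}+g=Q_\Lambda\equiv\Norm{\rho}^2$. Theorem~\ref{Q}(2) then shows that both $E(\Lambda^{+}_\gamma)$ and $E(\Lambda^{-}_\gamma)$ are orthogonal bases. The argument is pure algebra once both directions of the inequality are in hand, so no analyticity or continuity of $Q$ is needed.

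The one technical point is the applicability of Theorem~\ref{Q}, which is stated for compactly supported measures. Its proof in fact only uses that $\rho$ is finite: $\widehat{\rho}(\xi+\lambda)=\langle e_{-\xi},e_\lambda\rangle_{L^2(\rho)}$, so Bessel's inequality for the orthogonal family $E(S)$ gives the bound $Q_S\le\Norm{\rho}^2$, and Parseval's identity for a basis gives the identity $Q_S\equiv\Norm{\rho}^2$. If needed, I would remark that these two directions of Theorem~\ref{Q} hold for any finite Borel $\rho$ by exactly this argument, which is the main thing to check. I would also note that the product structure $\rho=\mu\times\nu$ plays no role in this lemma; it is used only later in the proof of Theorem~\ref{spectral}.
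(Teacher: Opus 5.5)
Your argument is correct and is essentially the standard proof of \cite[Lemma 4.1]{GL-JFA}, which the paper cites rather than reproving: split $Q$ over $\Lambda_1$ and $\Lambda_2$, deduce $Q_{\Lambda_2}(\xi\pm\gamma)\le Q_{\Lambda_2}(\xi)$, conclude that $Q_{\Lambda_2}$ is $\gamma$-periodic, and hence $Q_{\Lambda^{\pm}_\gamma}\equiv\Norm{\rho}^2$. One correction to your side remark: the direction of Theorem~\ref{Q}(2) you actually use at the end is ``$Q\equiv\Norm{\rho}^2$ implies basis'', and for non-compactly supported $\rho$ Bessel and Parseval alone do not give it; you also need the exponentials to be dense in $L^2(\rho)$, which holds for every finite Borel measure by uniqueness of the Fourier--Stieltjes transform.
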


For a vector $\xi$ in $\mathbb{R}^{n+m}$, we write $\xi=(\xi_{1},\xi_{2})$, where $\xi_{1}\in\mathbb{R}^n$ and $\xi_{2}\in\mathbb{R}^m$. It is direct to see that
\[
\widehat{\rho}(\xi)=\widehat{\mu}(\xi_{1})\cdot \widehat{\nu}(\xi_{2}).
\]
This implies  that
\[
\mathcal{Z}(\widehat{\rho})=\{\xi=(\xi_{1},\xi_{2})\in\mathbb{R}^{n+m}:\text{$\xi_{1}\in\mathcal{Z}(\widehat{\mu})$ ~or ~$\xi_{2}\in\mathcal{Z}(\widehat{\nu})$}\}.
\]
The following lemma is immediate from the definition of orthogonal sets.

\begin{lemma}\label{zero}
Let $\lambda =(\lambda_1,\lambda_2)$ and $\lambda'=(\lambda'_1,\lambda_2')$ be two distinct points in $\mathbb{R}^{n+m}$, and $\rho=\mu\times\nu$ be the measure defined in \eqref{measure}. The exponential functions $e_{\lambda}$ and $e_{\lambda'}$ are orthogonal in $L^{2}(\rho)$ if and only if $\lambda_{1}-\lambda'_{1}\in \mathcal{Z}(\widehat{\mu})$, or $\lambda_{2}-\lambda'_{2}\in \mathcal{Z}(\widehat{\nu})$. 
\end{lemma}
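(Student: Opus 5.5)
The statement to prove is Lemma~\ref{zero}; it follows directly from the definition of the inner product in $L^2(\rho)$ and the product structure of $\rho$.

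First compute, for $\lambda\neq\lambda'$,
$$
\langle e_\lambda, e_{\lambda'}\rangle_{L^2(\rho)} = \int e^{2\pi i(\lambda-\lambda')\cdot x}\,d\rho(x).
$$
With the paper's sign convention for $\widehat{\rho}$, this integral equals $\widehat{\rho}(\lambda'-\lambda)$; with the other convention it equals $\widehat{\rho}(\lambda-\lambda')$. In either case $e_\lambda\perp e_{\lambda'}$ exactly when $\widehat{\rho}$ vanishes at $\pm(\lambda-\lambda')$. Since $\rho$ is a finite positive measure, $\widehat{\rho}(-\xi)=\overline{\widehat{\rho}(\xi)}$, so $\mathcal Z(\widehat{\rho})$ is symmetric under $\xi\mapsto-\xi$. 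Therefore orthogonality is equivalent to $\lambda-\lambda'\in\mathcal Z(\widehat\rho)$.

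Next, write $x=(x_1,x_2)$ with $x_1\in\R^n$ and $x_2\in\R^m$. The exponential factors as $e_{\lambda_1-\lambda_1'}(x_1)\,e_{\lambda_2-\lambda_2'}(x_2)$, and both factors are bounded. Since $\mu$ and $\nu$ are finite, Fubini's theorem applies to the product measure $\rho=\mu\times\nu$. This gives the identity $\widehat{\rho}(\xi)=\widehat{\mu}(\xi_1)\widehat{\nu}(\xi_2)$ already recorded above.

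A product of two complex numbers vanishes if and only if one of the factors vanishes. Hence $\lambda-\lambda'\in\mathcal Z(\widehat\rho)$ holds if and only if $\lambda_1-\lambda_1'\in\mathcal Z(\widehat\mu)$ or $\lambda_2-\lambda_2'\in\mathcal Z(\widehat\nu)$. This is precisely the description of $\mathcal Z(\widehat\rho)$ displayed before the lemma, and it proves the claim.

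There is no real obstacle here. The only points that need care are the sign convention, which the symmetry of the zero sets removes, and the justification of Fubini's theorem, which holds because the integrand is bounded and $\rho$ is finite.
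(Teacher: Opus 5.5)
Your proof is correct and follows the argument the paper has in mind. The paper records $\widehat{\rho}(\xi)=\widehat{\mu}(\xi_1)\widehat{\nu}(\xi_2)$ and the resulting description of $\mathcal{Z}(\widehat{\rho})$, then calls the lemma immediate from the definition of orthogonality; your remarks on the sign convention (with the paper's $\widehat{\mu}(\xi)=\int e^{-2\pi i\xi\cdot x}\,d\mu(x)$, the inner product is $\widehat{\rho}(\lambda'-\lambda)$) and on the symmetry of zero sets of positive measures supply the only details it leaves implicit.
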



Let $ \Lambda\subset \R^{n+m}$ be a spectrum for $\rho$. Let us write $\lambda = (\lambda_1,\lambda_2)\in \R^{n+m}$. Given $t\in \R^n$ we define a mapping  $\phi_{t}$ on $\Lambda$ by
\begin{equation}\label{mapping}
\phi_{t}(\lambda):=
\left\{\begin{matrix}
\lambda,\ \ \ \ \ \ \ \ \ & \ \lambda_{1}\in  \Gamma,\\
\lambda+\tau(t),& \lambda_{1}\notin \Gamma,
\end{matrix}\right.
\end{equation}
where $\tau(t):=(t,0)\in \mathbb{R}^{n+m}$. 

\begin{proposition}\label{mapspectrum}
 Given $t\in \R^n$. Let $\phi_{t}$ be the mapping defined by $\eqref{mapping}$, $\rho=\mu\times\nu$ be the measure defined in $\eqref{measure}$, and ${\mathcal Z}(\widehat{\mu})\cup \{0\}$ is a subset of  the {additive} subgroup $\Gamma$. If $\Lambda$ is a spectrum of $\rho$, then
$\phi_{t}(\Lambda)$ is also a spectrum of $\rho$.
\end{proposition}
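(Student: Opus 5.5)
Split $\Lambda$ as in \eqref{partition}: let $\Lambda_1=\{\lambda\in\Lambda:\lambda_1\in\Gamma\}$ and $\Lambda_2=\{\lambda\in\Lambda:\lambda_1\notin\Gamma\}$, and take $\gamma=\tau(t)=(t,0)$. Then $\phi_t(\Lambda)=\Lambda_1\cup(\Lambda_2+\gamma)=\Lambda^+_\gamma$ and $\phi_{-t}(\Lambda)=\Lambda^-_\gamma$. The plan is to verify the hypotheses of Lemma \ref{3or}, which then gives that $E(\Lambda^{+}_\gamma)$ is an orthogonal basis for $L^2(\rho)$, i.e. $\phi_t(\Lambda)$ is a spectrum.

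\emph{Disjointness.} If $t\in\Gamma$, then $\Lambda_2\pm\gamma$ still has first coordinates outside $\Gamma$, because $\Gamma$ is a group. So $\Lambda_2\pm\gamma$ is disjoint from $\Lambda_1$, whose first coordinates lie in $\Gamma$. If $t\notin\Gamma$, disjointness can fail. I would therefore first treat a general $t$ by writing $\Lambda$ as a disjoint union according to the cosets $\lambda_1+\Gamma$, and checking disjointness and orthogonality directly, without insisting on the lemma's disjointness hypothesis. The cleanest route is to prove mutual orthogonality of $E(\Lambda^\pm_\gamma)$ and then the completeness argument of Lemma \ref{3or} (from \cite{GL-JFA}), which works verbatim as long as the maps $\Lambda\to\Lambda^\pm_\gamma$ are bijections. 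Injectivity is exactly what needs checking. Suppose $\lambda\in\Lambda_1$ and $\lambda'\in\Lambda_2$ satisfy $\lambda=\lambda'+\gamma$. Then $\lambda_2=\lambda'_2$ and $\lambda_1-\lambda'_1=t$. Since $\lambda\ne\lambda'$ are both in $\Lambda$ and orthogonal, Lemma \ref{zero} gives either $\lambda_1-\lambda'_1\in\mathcal Z(\widehat\mu)\subset\Gamma$ or $0=\lambda_2-\lambda_2'\in\mathcal Z(\widehat\nu)$. The second is impossible, since $\widehat\nu(0)=\Norm{\nu}>0$. Hence $\lambda_1-\lambda_1'\in\Gamma$, which contradicts $\lambda_1\in\Gamma$, $\lambda'_1\notin\Gamma$. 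So collisions never occur, and the disjointness hypothesis holds for every $t$.

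\emph{Orthogonality of $E(\Lambda^\pm_\gamma)$.} Pairs inside $\Lambda_1$, and pairs inside $\Lambda_2+\gamma$, have unchanged differences, so they stay orthogonal. For the mixed pairs, take $\lambda\in\Lambda_1$ and $\lambda'\in\Lambda_2$. The original difference $\lambda_1-\lambda'_1$ is not in $\Gamma$, hence not in $\mathcal Z(\widehat\mu)$. Orthogonality of $e_\lambda$ and $e_{\lambda'}$ therefore forces $\lambda_2-\lambda'_2\in\mathcal Z(\widehat\nu)$ by Lemma \ref{zero}. Translating by $\gamma=(t,0)$ does not change the second coordinates, so $e_\lambda\perp e_{\lambda'\pm\gamma}$ again by Lemma \ref{zero}. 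Thus both $E(\Lambda^\pm_\gamma)$ are orthogonal sets, and Lemma \ref{3or} finishes the proof.

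The main obstacle, conceptually, is seeing that cross pairs between $\Lambda_1$ and $\Lambda_2$ are orthogonal only through the $\nu$-factor. This is where the hypothesis $\mathcal Z(\widehat\mu)\cup\{0\}\subset\Gamma$ is used; the inclusion of $0$ handles coincident first coordinates. The other delicate point is the disjointness needed in Lemma \ref{3or}, which I resolve by the same zero-set argument as above.
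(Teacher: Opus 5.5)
Your proposal is correct and follows the paper's proof step by step. It uses the same split $\Lambda_1=\{\lambda\in\Lambda:\lambda_1\in\Gamma\}$, $\Lambda_2=\{\lambda\in\Lambda:\lambda_1\notin\Gamma\}$, and the same observation that mixed pairs are orthogonal only through the $\widehat\nu$-factor, which $\tau(t)$ leaves unchanged. It also uses the same zero-set argument, with $\widehat\nu(0)=\Norm{\nu}\neq0$, to get disjointness of $\Lambda_1$ from $\Lambda_2\pm\tau(t)$, and then applies Lemma \ref{3or}. Your aside that disjointness ``can fail'' for $t\notin\Gamma$, and the hedging about re-running the completeness argument, are contradicted by your own injectivity argument. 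You can drop them and apply Lemma \ref{3or} exactly as stated.
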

\begin{proof}

We first prove that $E(\phi_t(\Lambda))$ is mutually orthogonal in $L^2(\rho)$. Given distinct $\lambda = (\lambda_1,\lambda_2)$ and $\lambda' = (\lambda_1',\lambda_2')$ in $\Lambda$. We have three cases to consider according to the definition of $\phi_t$. 
\begin{enumerate}
    \item  $\lambda_1,\lambda_1'\in\Gamma$;
    \item  $\lambda_1,\lambda_1'\not\in\Gamma$;
    \item  $\lambda_1\in\Gamma$ and $\lambda_1'\not\in \Gamma$ or vice versa. 
\end{enumerate}

In the first two cases, $\phi_t(\lambda)-\phi_t(\lambda') = \lambda-\lambda'$. As $e_{\lambda}$, $e_{\lambda'}$ are mutually orthogonal in $L^2(\rho)$, we know the same is also true for $e_{\phi_t(\lambda)}$, $e_{\phi_t(\lambda')}$.

In the last case, we only need to consider one situation since the role of $\lambda_1,\lambda_1'$ is symmetric.  Now,
\begin{equation}\label{phi_t-difference}
\phi_t(\lambda)-\phi_t(\lambda') = (\lambda_1-\lambda_1'-t,\lambda_2-\lambda_2').
\end{equation}
From our assumption, $\lambda_1\in \Gamma$, while $\lambda_1'\not\in \Gamma$. As  $\Gamma$ is an {additive} group, we have $\lambda_1-\lambda_1'\not\in\Gamma$. By Lemma \ref{zero} and ${\mathcal Z}(\widehat{\mu})\subset \Gamma$, it means that $\lambda_1-\lambda_1'\not\in{\mathcal Z}(\widehat{\mu})$ and thus $\lambda_2-\lambda_2'\in{\mathcal Z}(\widehat{\nu})$. Hence, $\phi_t(\lambda)-\phi_t(\lambda')\in {\mathcal Z}(\widehat{\rho})$ in view of the second component in (\ref{phi_t-difference}). This completes the proof of the mutual orthogonality. A similar argument also  shows that  $E(\phi_{-t}(\Lambda))$ is also a mutually orthogonal set.

 We next show that $\phi_{t}(\Lambda)$ is indeed a spectrum of $\rho$. To this end, we  divide the set $\Lambda$ into two subsets, $\Lambda_{1}$ and $\Lambda_{2}$, such that they satisfy $\eqref{partition}$ and the assumptions in Lemma \ref{3or}.

Let
\[
\Lambda_{1}:=\{\lambda\in \Lambda:\lambda_{1}\in \Gamma\},\ \ \Lambda_{2}:=\{\lambda\in \Lambda:\lambda_{1}\notin \Gamma\}.
\]
We have immediately that 
$$
\phi_{t}(\Lambda)=\Lambda_{1}\cup(\Lambda_{2}+\tau(t)), \ \ \phi_{-t}(\Lambda)=\Lambda_{1}\cup(\Lambda_{2}-\tau(t)).$$
Moreover,
$$\Lambda_{1}\cap(\Lambda_{2}+\tau(t))=\varnothing, \ \ \Lambda_{1}\cap (\Lambda_{2}-\tau(t))=\varnothing.$$
Indeed, if either one of the the intersections is not disjoint, we will find some $(\lambda_1,\lambda_2) = (\lambda_1'+t,\lambda'_2)$ or $(\lambda_1'-t,\lambda'_2)$ where $(\lambda_1',\lambda_2')\in\Lambda_2$. In both cases, it implies that $\lambda_2 = \lambda_2'$. Since $(\lambda_1-\lambda_1',\lambda_2-\lambda_2')\in{\mathcal Z}(\widehat{\rho})$,   $\lambda_1-\lambda_1'\in{\mathcal Z}(\widehat{\mu})\subset \Gamma$ by the mutually orthogonality of $E(\Lambda)$ and Lemma \ref{zero}. As $\lambda_1\in\Gamma$, this forces $\lambda_1'\in\Gamma$ which is a contradiction since $\lambda_1'\in\Lambda_2$.

Finally, we can invoke  Lemma \ref{3or} to conclude that {each of} $\phi_{t}(\Lambda)$ and $\phi_{-t}(\Lambda)$ is a spectrum of $\rho$.
\end{proof}

The following lemma shows that if we can construct a spectrum $\Lambda$ such that there is only one equivalence class, then $\nu$ must be spectral.

\begin{lemma}\label{spectrum}
Assume that $\rho=\mu\times\nu$ is the product measure defined in $\eqref{measure}$, and $\Upsilon$ is a mutually orthogonal set for $\mu$ (here $\Upsilon$ is not necessarily an additive subgroup). 
If $\rho$ admits a spectrum 
$$
\Lambda\subset \Upsilon\times \mathbb{R}^{m},
$$ then each set
\[
\Lambda_{\gamma}:=\{\lambda\in\mathbb{R}^{m}:(\gamma,\lambda)\in \Lambda\}, ~~\gamma\in \Upsilon,
\]
is a spectrum of $\nu$ and $\mu$ is a spectral measure with spectrum $\Upsilon$. 
\end{lemma}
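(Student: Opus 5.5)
Since $\Upsilon$ is mutually orthogonal for $\mu$, any two distinct $\gamma,\gamma'\in\Upsilon$ satisfy $\gamma-\gamma'\in\mathcal Z(\widehat\mu)$. We use the Jorgensen--Pedersen criterion (Theorem \ref{Q}) for $\rho$ and split the sum over $\Lambda$ according to the first coordinate $\gamma$. Since $\Lambda\subset\Upsilon\times\R^m$, we have $\Lambda=\bigcup_{\gamma\in\Upsilon}\{\gamma\}\times\Lambda_\gamma$, where some $\Lambda_\gamma$ may be empty. Put $Q_\gamma(\xi_2):=\sum_{\lambda\in\Lambda_\gamma}|\widehat\nu(\xi_2+\lambda)|^2$. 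Then for all $(\xi_1,\xi_2)$,
\begin{equation*}
\Norm{\mu}^2\Norm{\nu}^2=\sum_{\gamma\in\Upsilon}|\widehat\mu(\xi_1+\gamma)|^2\,Q_\gamma(\xi_2).
\end{equation*}

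First, each $E(\Lambda_\gamma)$ is orthogonal in $L^2(\nu)$. Take distinct $\lambda,\lambda'\in\Lambda_\gamma$. The points $(\gamma,\lambda)$ and $(\gamma,\lambda')$ are distinct elements of $\Lambda$ with first coordinates differing by $0\notin\mathcal Z(\widehat\mu)$, since $\widehat\mu(0)=\Norm{\mu}>0$. Lemma \ref{zero} then forces $\lambda-\lambda'\in\mathcal Z(\widehat\nu)$. By Theorem \ref{Q}(1), $Q_\gamma\le\Norm{\nu}^2$ everywhere.

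Similarly, $E(\Upsilon)$ is orthogonal in $L^2(\mu)$ by hypothesis, so $\sum_{\gamma\in\Upsilon}|\widehat\mu(\xi_1+\gamma)|^2\le\Norm{\mu}^2$. Combining this with $Q_\gamma\le\Norm\nu^2$ in the identity above gives
\begin{equation*}
\Norm{\mu}^2\Norm{\nu}^2\le\Norm{\nu}^2\sum_\gamma|\widehat\mu(\xi_1+\gamma)|^2\le\Norm{\mu}^2\Norm{\nu}^2 .
\end{equation*}
Hence equality holds throughout. The second equality shows $\sum_\gamma|\widehat\mu(\xi_1+\gamma)|^2\equiv\Norm\mu^2$, so $\Upsilon$ is a spectrum of $\mu$ by Theorem \ref{Q}(2). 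The first equality gives
\begin{equation*}
\sum_\gamma|\widehat\mu(\xi_1+\gamma)|^2\bigl(\Norm\nu^2-Q_\gamma(\xi_2)\bigr)=0,
\end{equation*}
a sum of nonnegative terms. So for each $\gamma$ and each $\xi_2$, $Q_\gamma(\xi_2)=\Norm\nu^2$ whenever $\widehat\mu(\xi_1+\gamma)\neq0$ for some $\xi_1$.

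The main point to check is that this last condition always holds. Choosing $\xi_1=-\gamma$ gives $\widehat\mu(0)=\Norm\mu\ne0$, so $Q_\gamma\equiv\Norm\nu^2$ for every $\gamma\in\Upsilon$. In particular every $\Lambda_\gamma$ is nonempty, and by Theorem \ref{Q}(2) each $\Lambda_\gamma$ is a spectrum of $\nu$.

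One technical caveat is that Theorem \ref{Q} is stated for compactly supported measures. If the paper's setting does not guarantee compact support, we would instead argue directly with the Parseval identity. Applying it to $f\otimes g$ with $f=e_{-\xi_1}$ and $g=e_{-\xi_2}$ gives the same identity, and Bessel's inequality for orthogonal exponentials gives the two upper bounds. This direct argument avoids any support assumption.
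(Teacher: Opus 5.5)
Your proof is correct and follows the same route as the paper. You decompose $\Lambda$ into fibers $\{\gamma\}\times\Lambda_\gamma$, bound using $Q_\gamma\le\Norm{\nu}^2$ and $\sum_\gamma|\widehat\mu(\xi_1+\gamma)|^2\le\Norm{\mu}^2$, and force equality throughout; you also make explicit two steps the paper leaves implicit, namely the orthogonality of each $\Lambda_\gamma$ via $\widehat\mu(0)=\Norm{\mu}\neq0$ and the choice $\xi_1=-\gamma$ to get $Q_\gamma\equiv\Norm{\nu}^2$ for every individual $\gamma$.
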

\begin{proof}
 Write $\Lambda = \bigcup_{\gamma\in\Upsilon}\{\gamma\}\times \Lambda_{\gamma}$. 
 Moreover, each $\Lambda_{\gamma}$ must be mutually orthogonal for $\nu$ and $\Upsilon$ is also mutually orthogonal for $\mu$. Therefore,
    \[
\begin{split}
\Norm{\rho}^2&=\sum_{(\gamma,\lambda)\in \Lambda}|\widehat{\rho}((\xi_1-\gamma,\xi_{2}-\lambda))|^2\nonumber\\
&=\sum_{\gamma\in\Upsilon}\sum_{\lambda\in\Lambda_{\gamma}}|\widehat{\mu}(\xi_1-\gamma)|^2\cdot
|\widehat{\nu}(\xi_{2}-\lambda)|^2\nonumber\\
&\le \Norm{\nu}^2\sum_{\gamma\in\Upsilon}|\widehat{\mu}(\xi_1-\gamma)|^2\le \Norm{\mu}^2\Norm{\nu}^2=\Norm{\rho}^2.
\end{split}
\]
This forces that all inequalities hold as equalities. In particular, all $\Lambda_{\gamma}$ are spectra for $\nu$ and $\Upsilon$ is a spectrum for $\mu$. 
\end{proof}

\subsection{Completion of the proof.} Suppose $\Lambda$ is a spectrum for $\rho$ and $\Gamma$ is an additive group in $\R^n$. We define an equivalence relation on $\Lambda$ by 
$$
\lambda\sim\lambda'~ \Longleftrightarrow ~\pi_1(\lambda)-\pi_1(\lambda')\in \Gamma, 
$$
where $\pi_1$ is the orthogonal projection to the first $n$ coordinates. 
Let $\{[\lambda_n]: n\in\N\}$ be the collection of all the equivalence classes under this equivalence relation where $\lambda_n\in\Lambda$ for all $n\in\N$. We can choose $\lambda_1 = 0$ since $0\in\Lambda$. From here, for each $\gamma\in\Gamma$, we let 
$$
\Lambda_{n,\gamma} = \{y_2: \lambda=(\pi_1(\lambda_n)+\gamma,y_2)\in \Lambda\}.
$$
Then $\Lambda$ admits a natural decomposition according to the equivalence class. 
\begin{equation}\label{eq-equivalence class}
\Lambda = \bigcup_{n=1}^{\infty}\Lambda_n, \ \mbox{where}~ \Lambda_n =  \bigcup_{\gamma\in\Gamma} \{x_n+\gamma\}\times \Lambda_{n,\gamma}
\end{equation}
where we let $x_n = \pi_1(\lambda_n)$ and $x_1 = 0$. We can now  prove Theorem \ref{spectral}.
\smallskip

\begin{proof}[Proof of Theorem \ref{spectral} (1)]
We first take  a spectrum $\Lambda$ for $\rho$ and we decompose $\Lambda$ according to its equivalence class as in (\ref{eq-equivalence class}):
\begin{equation}\label{eq-dec}
\Lambda = \bigcup_{n=1}^{\infty} \left( \bigcup_{\gamma\in\Gamma} \{x_n+\gamma\}\times \Lambda_{n,\gamma}\right).
\end{equation}

We have the following simple observations:
$$
\phi_{-x_2}(\Lambda) =  \left(\bigcup_{n=1}^2\bigcup_{\gamma\in\Gamma} \{\gamma\}\times \Lambda_{n,\gamma} \right)~\cup~ \left(\bigcup_{n=3}^{\infty}\bigcup_{\gamma\in\Gamma} \{x_n-x_2+\gamma\}\times \Lambda_{n,\gamma}\right)
$$
$$
\phi_{-(x_3-x_2)}\circ\phi_{-x_2}(\Lambda)=\left( \bigcup_{n=1}^3\bigcup_{\gamma\in\Gamma} \{\gamma\}\times \Lambda_{n,\gamma}\right) ~\cup~\left( \bigcup_{n=4}^{\infty}\bigcup_{\gamma\in\Gamma} \{x_n-x_3+\gamma\}\times \Lambda_{n,\gamma}\right)
$$
Inductively, if we define $\Lambda^{(m)} = \phi_{-(x_{m}-x_{m-1})}\circ\cdots\circ\phi_{-(x_3-x_{2})}\circ\phi_{-x_2}(\Lambda)$, we have 
$$
\Lambda^{(m)}= \left(\bigcup_{n=1}^m\bigcup_{\gamma\in\Gamma} \{\gamma\}\times \Lambda_{n,\gamma}\right) ~\cup~ \left(\bigcup_{n=m+1}^{\infty}\bigcup_{\gamma\in\Gamma} \{x_n-x_m+\gamma\}\times \Lambda_{n,\gamma}\right)
$$
All these are spectra for $\rho$ by Proposition \ref{mapspectrum}. Finally, we apply $\phi_{x_m}$ to $\Lambda^{(m)}$ and obtain 
$$
\begin{aligned}
\Lambda_m' = &\left(\bigcup_{n=1}^m\bigcup_{\gamma\in\Gamma} \{\gamma\}\times \Lambda_{n,\gamma} \right)~\cup~ \left(\bigcup_{n=m+1}^{\infty}\bigcup_{\gamma\in\Gamma} \{x_n+\gamma\}\times \Lambda_{n,\gamma}\right)\\
: = & A_m\cup B_m
\end{aligned}
$$
 where
$$
A_m = \bigcup_{n=1}^m\bigcup_{\gamma\in\Gamma} \{\gamma\}\times \Lambda_{n,\gamma}, \ B_m = \bigcup_{n=m+1}^{\infty}\bigcup_{\gamma\in\Gamma} \{x_n+\gamma\}\times \Lambda_{n,\gamma}.
$$
For each $m\ge 1$, $\Lambda_m'$ is also a spectrum for $\rho$. Note that $B_m$ is a part of the orthogonal set  in $\Lambda$. In view of (\ref{eq-dec}) and $|\widehat{\rho}|^2$ is non-negative, we have 
\begin{equation}\label{eq-B_m-to 0}
\sum_{\lambda\in B_m} |\widehat{\rho}(\xi-\lambda)|^2 \to 0, \ \mbox{as} \ m\to\infty.
\end{equation}
Finally, we let 
$$
\Lambda' = \bigcup_{m=1}^{\infty} A_m \subset \Gamma\times \R^m.
$$
As $\Lambda_m'$ is a spectrum for $\rho$, we have 
$$
\Norm{\rho}^2= \sum_{\lambda\in\Lambda_{m}'} |\widehat{\rho}(\xi-\lambda)|^2 = \sum_{\lambda\in A_m} |\widehat{\rho}(\xi-\lambda)|^2+ \sum_{\lambda\in B_m} |\widehat{\rho}(\xi-\lambda)|^2. 
$$ 
Using (\ref{eq-B_m-to 0}), we see that 
$$
\begin{aligned}
 \sum_{\lambda\in\Lambda'}|\widehat{\rho}(\xi-\lambda)|^2 =&\lim_{m\to\infty} \sum_{\lambda\in A_m} |\widehat{\rho}(\xi-\lambda)|^2\\
 =&\lim_{m\to\infty}\left(\Norm{\rho}^2-\sum_{\lambda\in B_m} |\widehat{\rho}(\xi-\lambda)|^2\right)=\Norm{\rho}^2.
\end{aligned}
$$
This shows that $\Lambda'$ is our desired spectrum for $\rho$ and completes the proof. 
\end{proof}

\begin{proof}[Proof of Theorem \ref{spectral} (2).] Suppose that ${\mathcal Z}(\widehat{\mu})\cup\{0\} = \Gamma$ and $\Gamma$ is an {additive} subgroup of $\R^n$. Then $\Gamma$ is a mutually orthogonal set for $\mu$. By Theorem \ref{spectral} (1), we can always find a spectrum $\Lambda'$ such that 
$$
\Lambda'\subset \Gamma \times \R^m.
$$
But $\Gamma$ is mutually orthogonal for $\mu$. The desired conclusion follows from Lemma \ref{spectrum}.
\end{proof}
\medskip

\section{The proof of Theorem \ref{product-tiling interval-intro}}\label{sec3}

To prove Theorem \ref{product-tiling interval-intro}, let $\N_n = \{0,1,\cdots, n-1\}$ for convenience and recall the classical result of C. Long \cite{Long} (see also \cite[Corollary 2.6]{GL-2014}). Indeed,  $A,B$ is a complementary pair for $N$ if  and only if  after possibly interchanging the role of  $A$ and $B$, we can decompose $N = n_0\cdots n_{k-1}$ for some integers $n_j>1$, and writing ${\bf m}_j = n_0\cdots n_j$, one has
\begin{align}
   A &= n_0\N_{n_1}+ {\bf m}_2\N_{n_3}+\ldots+{\bf m}_{k-2}\N_{n_{k-1}}\nonumber\\
  B &= \N_{n_0}+{\bf m}_1\N_{n_2}+\ldots+{\bf m}_{k-3}\N_{n_{{k-2}}}.\nonumber 
\end{align}
  As a simple example, readers can consider $A = \{0,2\} = 2\N_2$ and $B = \{0, 1\} = \N_2$ for the proof. In this case, $A\oplus B = \{0,1,2,3\}$ and $(A,B)$ is a complementary pair for $4$.

Let 
$$
{\mathbb O}_n = \Z\setminus n\Z
$$
which is the set of integers that are not multiples of $n$. Let us first compute the zeros of the Fourier transform of $\mu$. Indeed, 
$$
\widehat{\delta_{\N_{n}}}(\xi) = \frac1n \sum_{j=0}^{n-1}e^{-2\pi i j\xi} = \frac{e^{-2\pi i n\xi}-1}{n(e^{-2\pi i \xi}-1)}.
$$
This shows that 
$${\mathcal Z}(\widehat{\delta_{\N_{n}}}) = \frac1n{\mathbb O}_n.
$$
In particular, $${\mathcal Z}(\widehat{\delta_A})=\frac{1}{{\bf m}_1}{\mathbb O}_{n_1}\cup\frac{1}{{\bf m}_3}{\mathbb O}_{n_3}\cup\cdots\cup\frac1{{\bf m}_{k-1}}{\mathbb O}_{n_{k-1}}$$ and
$$
{\mathcal Z}(\widehat{\delta_B})=\frac{1}{{\bf m}_0}{\mathbb O}_{n_0}\cup\frac{1}{{\bf m}_2}{\mathbb O}_{n_2}\cup\cdots\cup\frac1{{\bf m}_{k-2}}{\mathbb O}_{n_{k-2}}
$$
(notice that ${\mathbf m}_{k-1} = N$). Observe that
$$
{\mathbb O}_{n_{j-1}} \cup n_{j-1} {\mathbb O}_{n_{j-2}} = {\mathbb O}_{n_{j-1}n_{j-2}}.
$$
Combining the union into the same denominators, and iterating the above relationship, we see that 
\begin{equation}\label{eq-partition1}
\begin{aligned}
     {\mathcal Z}(\widehat{\delta_A})\cup{\mathcal Z}(\widehat{\delta_B}) = &\frac{1}{N} \left({\mathbb O}_{n_{k-1}}\cup n_{k-1}{\mathbb O}_{n_{k-2}}\cup n_{k-1}n_{k-2}{\mathbb O}_{n_{k-3}}\cup\cdots\cup n_1\cdots n_{k-1}{\mathbb O}_{n_0}\right)\\
     =& \frac{1}{N}{\mathbb O}_{N}.
\end{aligned}
\end{equation}   
Moreover, using the uniqueness of the representation of the integers in $\Set{0,\ldots,N-1}$ using the basis $n_0,\cdots, n_{k-1}$ (i.e. each $n \in{\mathbb O}_N$ can be uniquely represented as $$\varepsilon_0+\varepsilon_1 n_0+\cdots \varepsilon_{k-1}n_0\cdots n_{k-2}$$ 
for unique $\varepsilon_j\in {\mathbb N}_{n_j}$), we can show that
\begin{equation}\label{eq-partition2}{\mathcal Z}(\widehat{\delta_A})\cap{\mathcal Z}(\widehat{\delta_B})=\varnothing.\end{equation}
(\ref{eq-partition1}) and (\ref{eq-partition2}) leads us to conclude the following lemma:

\begin{lemma}\label{partitiion-lemma}
    Suppose that $(A,B)$ forms a complementary pair for $N$. Then the zero sets of $\widehat{\delta_A}$ and $\widehat{\delta_B}$ form a partition of $\frac{1}{N}{\mathbb O}_{N}$.
\end{lemma}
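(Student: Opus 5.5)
Write $A\oplus B=\N_N$ and regard $\delta_A,\delta_B$ as the normalized counting measures, as in the computation above. The plan is to prove two set identities for subsets of $\frac1N\Z$, namely
\[
{\mathcal Z}(\widehat{\delta_A})\cup{\mathcal Z}(\widehat{\delta_B})=\tfrac1N{\mathbb O}_N,\qquad {\mathcal Z}(\widehat{\delta_A})\cap{\mathcal Z}(\widehat{\delta_B})=\varnothing .
\]
The first identity comes from the factorization $\widehat{\delta_A}\widehat{\delta_B}=\widehat{\delta_{\N_N}}$. This factorization holds because $\delta_A*\delta_B=\delta_{\N_N}$, which is exactly the uniqueness of representation in $A\oplus B$. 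Its zero set is ${\mathcal Z}(\widehat{\delta_{\N_N}})=\frac1N{\mathbb O}_N$, and since a product of two functions vanishes exactly where one of the factors does, the union of the two zero sets is $\frac1N{\mathbb O}_N$. This gives the covering statement directly, without needing Long's structure theorem, and it also recovers (\ref{eq-partition1}).

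For disjointness I would use Long's description: after possibly swapping $A$ and $B$, $\widehat{\delta_A}$ and $\widehat{\delta_B}$ factor as products of $\widehat{\delta_{{\bf m}_{j-1}\N_{n_j}}}(\xi)=\widehat{\delta_{\N_{n_j}}}({\bf m}_{j-1}\xi)$, with odd $j$ going to $A$ and even $j$ to $B$. So ${\mathcal Z}(\widehat{\delta_A})=\bigcup_{j\ \mathrm{odd}}Z_j$ and ${\mathcal Z}(\widehat{\delta_B})=\bigcup_{j\ \mathrm{even}}Z_j$, where $Z_j=\frac1{{\bf m}_j}{\mathbb O}_{n_j}+\Z$. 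Since the union is contained in $\frac1N\Z$, each zero can be written as $\xi=\frac{r}{N}$ with $r\in{\mathbb O}_N$.

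The key step is to show that the sets $Z_j$ are pairwise disjoint. Since $N/{\bf m}_j=n_{j+1}\cdots n_{k-1}$, we have $\xi\in Z_j$ iff ${\bf m}_j\xi\notin\Z$ but ${\bf m}_{j-1}\xi\cdot n_j\in\Z$. That is, ${\bf m}_{j-1}\xi\notin\Z$ and ${\bf m}_j\xi\in\Z$, and this unwinds to
\[
\frac{N}{{\bf m}_j}\mid r \quad\text{and}\quad \frac{N}{{\bf m}_{j-1}}\nmid r .
\]
Let $j^*(r)$ be the largest index $j$ with ${\bf m}_j\xi\in\Z$ failing at $j-1$. Because ${\bf m}_j\xi\in\Z$ is monotone in $j$ (it holds for all $j\ge j_0$ once it holds at $j_0$), there is exactly one transition index. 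This index is the least $j$ with ${\bf m}_j\xi\in\Z$, and it is the only $j$ with $\xi\in Z_j$. In mixed-radix terms, it is the position of the lowest nonzero digit of $r$ read in the basis $n_{k-1},n_{k-2},\dots$, which matches the paper's remark about unique representation.

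Hence every $\xi\in\frac1N{\mathbb O}_N$ lies in exactly one $Z_j$, so the odd-indexed and even-indexed unions are disjoint, which gives (\ref{eq-partition2}). This also independently reconfirms the covering statement, since every element has some transition index $j\in\{0,\dots,k-1\}$. The uniqueness holds because ${\bf m}_{k-1}=N$ and ${\bf m}_{-1}:=1$ with $\xi\notin\Z$. The only delicate point, and the main obstacle, is checking that the ${\mathcal Z}$ of each factor is exactly $Z_j$ including the periodic shift, and that indices are matched correctly between $A$ and $B$ in Long's formula. I would verify this on the example $A=2\N_2$, $B=\N_2$ before writing it out in general.
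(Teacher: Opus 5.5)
Your proof is correct, but it divides the work differently from the paper.

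\textbf{Covering.} The paper goes through Long's structure theorem. It writes each zero set as a union of the pieces $\frac{1}{\mathbf{m}_j}\mathbb{O}_{n_j}$ and glues them with $\mathbb{O}_a\cup a\,\mathbb{O}_b=\mathbb{O}_{ab}$. You instead read it off from $\delta_A*\delta_B=\delta_{\N_N}$, i.e.\ $\widehat{\delta_A}\,\widehat{\delta_B}=\widehat{\delta_{\N_N}}$. That uses nothing but the definition of a complementary pair.

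\textbf{Disjointness.} The paper only points to uniqueness of mixed-radix representations. Your transition-index argument makes that remark rigorous: the condition $\mathbf{m}_j\xi\in\Z$ is monotone in $j$ since $\mathbf{m}_{j-1}\mid\mathbf{m}_j$, so each $\xi\in\frac1N\mathbb{O}_N$ lies in exactly one $Z_j$. It also gives the finer $k$-piece partition as a bonus.

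The ``delicate point'' you flag is harmless. The set $\frac{1}{\mathbf{m}_j}\mathbb{O}_{n_j}$ equals $\{\xi:\mathbf{m}_j\xi\in\Z,\ \mathbf{m}_{j-1}\xi\notin\Z\}$ and is already $\Z$-periodic, so the ``$+\Z$'' is redundant. The parity assignment matches the paper's formulas with $\mathbf{m}_{-1}=1$. You should state explicitly that factoring $\widehat{\delta_A}$ over odd $j$ uses directness of the sum in Long's formula, which is again mixed-radix uniqueness.

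You could avoid Long's theorem for disjointness too. With $A(x)=\sum_{a\in A}x^a$ and $B(x)=\sum_{b\in B}x^b$ one has $A(x)B(x)=1+x+\cdots+x^{N-1}$. A common zero $\xi$ would then make $e^{-2\pi i\xi}$ a double root of a divisor of $x^N-1$, whose roots are all simple. The explicit structure from Long's theorem is needed elsewhere in the paper (for the spectra $\Lambda_A,\Lambda_B$), but not for this lemma.

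Two small slips to fix:
\begin{enumerate}
\item Your first characterization of $Z_j$ should read ``$\mathbf{m}_{j-1}\xi\notin\Z$ but $\mathbf{m}_{j-1}n_j\xi\in\Z$''. As written, with $\mathbf{m}_j\xi\notin\Z$, it is self-contradictory, since $\mathbf{m}_{j-1}n_j=\mathbf{m}_j$.
\item $j^*(r)$ should be defined as the unique transition index, not the ``largest'' one.
\end{enumerate}
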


{Furthermore, (see e.g. \cite{GL-2014}), the measure $\delta_A$ and $\delta_B$ are spectral measures with spectrum}
\begin{equation}\label{LambdaA}
{\Lambda_A=\frac{1}{{\bf m}_1}\N_{n_1}+\frac{1}{{\bf m}_3}\N_{n_3}+\ldots+\frac{1}{{\bf m}_{k-1}}\N_{n_{k-1}}}
\end{equation}
and 
\begin{equation}\label{LambdaB}
{\Lambda_B=\frac{1}{{\bf m}_0}\N_{n_0}+\frac{1}{{\bf m}_2}\N_{n_2}+\ldots+\frac{1}{{\bf m}_{k-2}}\N_{n_{k-2}},}
\end{equation}
respectively.
We first prove a general lemma concerning completeness of exponentials in direct sums. 

{For a finite set $D\subset \mathbb{R}^n$, we denote by
\[
{\delta}_D:=\sum_{d\in D}\delta_d
\]
the  discrete measure associated with $D$.  We first have the following lemma about the completeness of the exponentials when combining different translates of the spectrum for convolutions of measures. 

\begin{lemma}\label{lm:conv}
Suppose that
\begin{enumerate}
\item $\varrho$ is a finite nonnegative measure on $\RR^n$ with the frequencies $\Lambda \subseteq \RR^n$ being a spectrum for $\varrho$,
\item $D \subseteq \RR^n$ is a finite set such that $(\supp\varrho) + D$ is a packing (i.e. the intersections of the support of any two different measures $\varrho_1=\delta_{d_1}\ast\varrho$ and $\varrho_2 = \delta_{d_2}\ast\varrho$ have measure zero in each $\varrho_i$).
\item $T$ is a finite set with $\Abs{T} = \Abs{D}$ such that $\Lambda+T$ is a direct sum (all sums are different) and is an orthogonal set for $L^2(\varrho*{\delta}_D)$.
\end{enumerate}
Then $\Lambda+T$ is also complete for $L^2(\varrho*\widetilde{\delta}_D)$.
\end{lemma}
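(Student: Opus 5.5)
We identify $L^2(\varrho*\delta_D)$ with $L^2(\varrho)^{D}$ and use a trace (dimension-counting) argument. The orthogonality assumption gives a Bessel inequality. Summing it over a suitable $|D|$-dimensional family of test functions forces equality, because $|T|=|D|$ and $\Lambda$ is a spectrum for $\varrho$. We read $\widetilde{\delta}_D$ in the conclusion as $\delta_D$.

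\emph{Identification and test functions.} By the packing hypothesis (2), the map $f\mapsto (f(\cdot+d))_{d\in D}$ is a unitary isomorphism $U:L^2(\varrho*\delta_D)\to \bigoplus_{d\in D}L^2(\varrho)$. Under $U$ the exponential $e_{x}$ becomes $(e^{2\pi i x\cdot d}\,e_{x})_{d\in D}$, and $\|e_x\|^2_{L^2(\varrho*\delta_D)}=|D|\,\Norm{\varrho}$. For $\xi\in\Lambda$ and $c\in\C^D$ we put $w_{\xi,c}:=U^{-1}\big((c_d\,e_{\xi})_{d\in D}\big)$, so that $\|w_{\xi,c}\|^2=\Norm{\varrho}\,|c|^2$. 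Define $u_x:=(e^{2\pi i x\cdot d})_{d\in D}\in\C^D$. A direct computation gives
$$
\langle w_{\xi,c},e_{\lambda+t}\rangle=\langle c,u_{\lambda+t}\rangle_{\C^D}\;\overline{\widehat{\varrho}(\lambda+t-\xi)}
$$
up to a harmless sign convention in $\widehat\varrho$.

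\emph{Bessel bound and the trace computation.} By hypothesis (3), the normalized family $\{e_{\lambda+t}/\sqrt{|D|\Norm{\varrho}}\}$ is orthonormal and its members are distinct. Bessel's inequality therefore gives, for every $c$,
$$
S(c):=\sum_{\lambda\in\Lambda,\,t\in T}|\langle c,u_{\lambda+t}\rangle|^2\,|\widehat{\varrho}(\lambda+t-\xi)|^2\le |D|\,\Norm{\varrho}^2\,|c|^2 .
$$
Now sum $S(c)$ over an orthonormal basis $c^{(1)},\dots,c^{(|D|)}$ of $\C^D$. Since $\sum_j|\langle c^{(j)},u\rangle|^2=|u|^2=|D|$, we get
$$
\sum_j S(c^{(j)})=|D|\sum_{t\in T}\sum_{\lambda\in\Lambda}|\widehat\varrho(\xi-t-\lambda)|^2=|D|\,|T|\,\Norm{\varrho}^2 ,
$$
where the last step uses Theorem \ref{Q}(2) for the spectrum $\Lambda$ of $\varrho$, applied at the point $\xi-t$. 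Because $|T|=|D|$, this equals $|D|^2\Norm{\varrho}^2$, which is exactly the sum of the Bessel upper bounds. Every Bessel inequality is individually an inequality, so equality must hold for each $c^{(j)}$. By rotating the basis, equality holds for every unit vector $c$. Hence Parseval's identity holds for every $w_{\xi,c}$, and, after polarization, on their linear span.

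\emph{Conclusion.} As $\xi$ ranges over $\Lambda$ and $c$ over $\C^D$, the vectors $w_{\xi,c}$ span a dense subspace of $L^2(\varrho*\delta_D)$. This is because $\{e_\xi\}_{\xi\in\Lambda}$ is complete in $L^2(\varrho)$ and $U$ is unitary. An orthonormal system satisfying Parseval on a dense subspace satisfies it everywhere, by continuity of both sides, and so it is complete. The step needing the most care is the inner-product computation, namely checking that the pairing factors exactly as $\langle c,u_{\lambda+t}\rangle\,\overline{\widehat\varrho(\cdot)}$. This relies on the packing assumption to split the integral over $D$ into $|D|$ disjoint pieces.
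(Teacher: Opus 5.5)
Your proof is correct and is essentially the paper's argument. The paper also uses the packing hypothesis to split $L^2(\varrho*\delta_D)$ into functions living on single translates $(\supp\varrho)+d$. It gets Parseval for such a function because each $\Lambda+t$ is a spectrum for $\varrho*\delta_d$ and $|T|=|D|$, and then deduces completeness since Parseval equality forces membership in the closed span.

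Your Bessel squeeze, with the trace over an orthonormal basis of $\C^D$ and the rotation, is not needed. For $c$ a standard basis vector one has $|\langle c,u_{\lambda+t}\rangle|=1$, so $S(c)=|T|\,\Norm{\varrho}^2=|D|\,\Norm{\varrho}^2$ holds directly, which is exactly the paper's computation for $f_d$. These particular $w_{\xi,c}$ already span a dense subspace.
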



\begin{proof}
Let $\{ e_{n}:n\in\N\}=\{e_{\lambda+t}:\lambda\in \Lambda,t\in T\}$. Since $\{ e_n\}$ is orthogonal we can complete it to an orthogonal basis $\mathcal{D}$ adding the vectors $\{ \upsilon_m:m\in \N\}$ with $\Norm{\upsilon_m}=\Norm{e_n}$.
Let $f\in L^2 (\varrho\ast{\delta}_D)$, we can write $f = \sum_{d\in D} f_d$ for some unique $f_d \in L^2(\varrho*{\delta}_D)$ which is non-zero only on the set $(\supp\varrho)+d$, for some $d\in D$. For such an $f_d$ we have

\begin{align}\label{fd}
\Norm{f_d}^2_{L^2(\varrho*{\delta}_D)}
&=\Norm{f_d}^2_{L^2(\varrho*\delta_d)}
\end{align}
{since $f_d=0$ off $(\supp\varrho)+d$ and the assumption $(2)$}. Moreover, we have
\begin{align*}
\sum_{\lambda \in \Lambda, t \in T} \Abs{\inner{f_d}{e_{\lambda+t}}_{L^2(\varrho*{\delta}_D)}}^2
 &= \sum_{t \in T} \sum_{\lambda \in \Lambda} \Abs{\inner{f_d}{e_{\lambda+t}}_{L^2(\varrho*\delta_d)}}^2\ \ \ \text{ (since $f_d=0$ off $(\supp\varrho)+d$)}\\
 &= \sum_{t \in T} \Norm{\varrho} \cdot \Norm{f_d}^2_{L^2(\varrho*\delta_d)}\ \ \ \text{ (since $\Lambda+t$ a spectrum for $\varrho*\delta_d$)}\\
 &= \Abs{D} \cdot \Norm{\varrho} \cdot \Norm{f_d}^2_{L^2(\varrho*\delta_d)}\ \ \ \text{(since $\Abs{T}=\Abs{D}$)}\\
 &= \Abs{D} \cdot \Norm{\varrho} \cdot \Norm{f_d}^2_{L^2(\varrho*{\delta}_D)}\ \ \ (\text{since (\ref{fd})})\\
 &= \Norm{\varrho*{\delta}_D} \cdot \Norm{f_d}^2_{L^2(\varrho*{\delta}_D)}.
\end{align*}
But we also have 
\[\Norm{\varrho*{\delta}_D}\cdot \Norm{f_d}^2_{L^2(\varrho*{\delta}_D)}=\sum_{n} \Abs{\inner{f_d}{e_{n}}_{L^2(\varrho*{\delta}_D)}}^2+\sum_{m} \Abs{\inner{f_d}{\upsilon_{m}}_{L^2(\varrho*{\delta}_D)}}^2\]
using Parseval with respect to $\mathcal{D}$. This implies that $\inner{f_d}{\upsilon_{m}}_{L^2(\varrho*{\delta}_D)}=0$ for all $m$ and we can write $f_d=\sum_{n}{\frac{1}{\Norm{\varrho*{\delta}_D}}}\inner{f_d}{e_{n}}_{L^2(\varrho*{\delta}_D)}e_n$. Hence, 
\[f=\sum_{d\in D} f_d=\sum_{n}{\frac{1}{\Norm{\varrho*{\delta}_D}}}\left(\sum_{d\in D}\inner{f_d}{e_{n}}_{L^2(\varrho*{\delta}_D)}\right)e_n=\sum_{n}{\frac{1}{\Norm{\varrho*{\delta}_D}}}\inner{f}{e_{n}}_{L^2(\varrho*{\delta}_D)}e_n,\]
which implies that $\Lambda+T$ is complete for $L^2(\varrho*{\delta}_D)$.
\end{proof}

\begin{proof}[Proof of Theorem \ref{product-tiling interval-intro}.] Recall that $\mu$ is the Lebesgue measure supported on $A+[0,1]$ with $(A,B)$ forming a complementary pair for $N$. 
 $$\eta := \mathds{1}_{[0, N]}\times \nu $$

\medskip

{\bf Claim:} $\eta$ is a spectral measure. 

\medskip

\noindent If the claim holds. Since $\mu = \delta_A\ast\mathds{1}_{[0, 1]}$, so we have $\mathds{1}_{[0, N]} = \mu*\delta_B$  and
$$
\Set{0} \cup \mathcal{Z}(\ft\mu) \cup \mathcal{Z}(\ft{\delta_B}) = \Set{0} \cup \mathcal{Z}(\ft{\mathds{1}_{[0, N]}}) = \frac1N\ZZ,
$$
{which is an additive subgroup of $\RR$.
Applying Theorem \ref{spectral} (2), $\nu$ is also spectral, thereby completing the proof.}

We now justify the claim. First we notice that we can rewrite $\eta$ as follows:
$$
\eta = \delta_{B\times\Set{0}}*(\mu\times\nu).
$$
By (\ref{eq-partition1}), ${\mathcal Z}(\widehat{\delta}_A) \subset \frac{1}{N}\ZZ$. As $\mu\times \nu$ is spectral,  Theorem \ref{spectral} (1) shows that   there is a spectrum $\Lambda$ for 
$\mu\times\nu$ such that  
\begin{equation}\label{eq-Lambda-1.5}
\Lambda\subset (\frac1N\ZZ) \times \RR^m.
\end{equation}
Recall that $\delta_A,\delta_B$ are spectral with spectrum $\Lambda_A$ and $\Lambda_B$ respectively in (\ref{LambdaA}) and (\ref{LambdaB}). 
We will show that
$$
L := \Lambda+(\Lambda_B\times\Set{0})
$$
is a spectrum of $\eta$, where $\Lambda$ is a spectrum of $\mu\times\nu$ given in (\ref{eq-Lambda-1.5}) and $\Lambda_B$ is given by (\ref{LambdaB}).

This can be done by applying Lemma \ref{lm:conv} with $T=\Lambda_B\times\Set{0}$, $\varrho=\mu\times\nu$ and $D=B\times\{0\}$.  Condition (1) in Lemma \ref{lm:conv} is clearly satisfied.  (2) is also satisfied because $(A,B)$ is a complementary pair, so $((A+b)+[0,1])\cap ((A+b')+[0,1])$ intersects only at the boundary. Thus, $\mu ((A+b)+[0,1]\cap (A+b')+[0,1]) = 0$, so is the  $\varrho$ measure of its cartesian product with $\R$.  Finally,   it remains to verify that $L$ is orthogonal for $\eta$. Then we can apply Lemma \ref{lm:conv} to conclude that $\eta$ is spectral and the claim will be justified. 

Take $\lambda, \lambda' \in \Lambda$ and $\ell,\ell' \in \Lambda_B$ are given so that $\lambda+(\ell,0) \neq \lambda'+ (\ell', 0)$ and write $\lambda = (\lambda_1,\lambda_2)$, $\lambda' = (\lambda_1',\lambda_2')$ respectively. Let
$$
\Delta = \lambda+(\ell,0) - (\lambda'+ (\ell', 0)) = (\lambda_1-\lambda_1'+\ell-\ell', \lambda_2-\lambda_2')
$$
and note that 
\begin{equation}\label{eta-ft}
\ft\eta(\xi_1, \xi_2) =  \ft{\delta_B}(\xi_1) \ft\mu(\xi_1)  \ft\nu(\xi_2) = \ft{\mathds{1}_{[0, N]}}(\xi_1) \ft{\nu}(\xi_2) =  \ft{\delta_B}(\xi_1) \ft\varrho(\xi).
\end{equation}
We aim to show that $\ft\eta(\Delta) = 0$. This conclusion is immediate if  $\lambda=\lambda'$ (where $\widehat{\delta_B}(\ell-\ell')=0$) or $\ell = \ell'$ (where $\widehat{\varrho} (\lambda-\lambda')=0$).  Assume therefore that $\lambda \neq \lambda'$ and $\ell \neq \ell'$.

When $\lambda\ne\lambda'$, we have $\ft{\nu}(\lambda_2-\lambda_2')=0$ or $\ft{\mu}(\lambda_1-\lambda_1')=0$.  If $\ft\nu(\lambda_2-\lambda_2') = 0$ then $\widehat{\eta}(\Delta)=0$ in view of (\ref{eta-ft}). If $\ft\nu(\lambda_2-\lambda_2') \ne 0$,  we  must have $\ft{\mu}(\lambda_1-\lambda_1')=0$. We need to show that $\ft{\mathds{1}_{[0, N]}}(\lambda_1-\lambda_1'+\ell-\ell') = 0$. By (\ref{LambdaB}) and (\ref{eq-Lambda-1.5}),  $\lambda_1, \lambda_1', \ell$ and $\ell' \in \frac1N\ZZ$. The only way this can fail is if $\lambda_1-\lambda_1'+\ell-\ell'= 0$ which is equivalent to $\lambda_1-\lambda_1' = \ell'-\ell$. We have
$$
\lambda_1-\lambda_1' \in \Set{0} \cup {\mathcal Z}(\ft\mu) \text{ and }
\ell'-\ell \in \Set{0} \cup {\mathcal Z}(\ft{\delta_B}).
$$
From Lemma \ref{partitiion-lemma}, By the disjointness of ${\mathcal Z}(\ft{\delta_A})$ and ${\mathcal Z}(\ft{\delta_B})$, we also have the disjointness of the set ${\mathcal Z}(\ft\mu)$ and ${\mathcal Z}(\ft{\delta_B})$. This forces that  $\lambda_1=\lambda'_1$ and $\ell = \ell'$. But then $\ft{\mu}(\lambda_1-\lambda_1')\ne 0$. This results in a contradiction. Hence, $L$ must be a spectrum for $\eta$. The proof is therefore complete. 
\end{proof}

\section{Examples.}\label{sec4} We now illustrate some examples in which Theorem \ref{spectral} and Corollary \ref{cor-spectral} applies. Our theorem applies to all parallelepiped $P$ in $\R^d$. Indeed, we can write $P = A ([0,1]^d)$ for some invertible matrix $A$. Hence, ${\mathcal L}|_P\times \nu$ is spectral if and only if ${\mathcal L}|_{[0,1]^d}\times (\nu\circ A^{-1})$ is spectral. Hence, ${\mathcal L}|_P\times \nu$ is spectral implies $\nu$ must be spectral. The following gives an example for which the measure can be applied to some other general disconnected domains that may not tile $\N_n$ for any positive integers $n$. 

\begin{example}
Let $\Omega = [0,\frac13]\cup [\frac{5}{3},2]\cup [\frac{7}{3},\frac83]$. Then $\Omega$ is a fundamental domain of $\Z$. Its Fourier transform is given by 
$$
\widehat{\chi_{[0,\frac13]}} (\xi)\cdot \left(1+ e^{2\pi i \frac{5}{3}\xi}+e^{2\pi i \frac{7}{3}\xi}\right). 
$$
One can solve the zeros using the fact that $1+z_1+z_2=0$ for some $|z_1|=|z_2|=1$ if and only if $z_1,z_2$ is a permutation of $e^{\pm i2\pi/3}$. Its zero is exactly $\Z\setminus\{0\}$. Hence, by Theorem \ref{spectral}, ${\mathcal L}|_{\Omega}\times \nu$ is spectral if and only if $\nu$ is spectral. 
\end{example}


\begin{example}
 In this example, we construct a measure $\mu_0$ that has lattice Fourier zeros without full rank.    Let $\mu_0=\mathcal{L}|_{[0,1]^2}\ast \mathcal{L}|_{-[0,1]^2}$. It is well-known that 
\[
\mathcal{Z}(\widehat{\mathcal{L}|_{[0,1]^2}})=\left\{(\xi_1,\xi_2)\in\mathbb{R}^2\mid \xi_1\in \mathbb{Z}\setminus\{0\}\ \text{or}\ \xi_2\in \mathbb{Z}\setminus\{0\}\right\}.
\]
Note that \begin{equation}\label{0}
\widehat{\mu_0}(x)=|\widehat{\mathcal{L}|_{[0,1]^2}}(x)|^2\geq0,\ x\in \mathbb{R}^2,
\end{equation}
and $\mathbb{Z}^2\subseteq (\mathcal{Z}(\widehat{\mu_0})\cup\{0\})$.  Let 
\[A_1=\begin{pmatrix}
 1 &0 \\
 0 &1
\end{pmatrix},
A_2=\begin{pmatrix}
 1 &0 \\
 1 &1
\end{pmatrix},A_3=\begin{pmatrix}
 1 &1 \\
 0 &1
\end{pmatrix}
,A_4=\begin{pmatrix}
 1+\sqrt{2} &1 \\
\sqrt{2} &1
\end{pmatrix}.\]
For each $i = 1,2,3,4$, we define the measure $\mu_i(E) = \mu_0 (A_i (E))$, and let $S_i$ be the zero set of the Fourier transform $\mu_i$.  Then  we have
\begin{align}
S_1&=\{(x_1,x_2)\in \mathbb{R}^2:x_1\in \mathbb{Z}\setminus\{0\} \ \text{or}\ x_2\in \mathbb{Z}\setminus\{0\}\}\nonumber\\
S_2&=\{(x_1,x_2)\in \mathbb{R}^2:x_1-x_2\in \mathbb{Z}\setminus\{0\} \ \text{or}\ x_2\in \mathbb{Z}\setminus\{0\}\}\nonumber\\
S_3&=\{(x_1,x_2)\in \mathbb{R}^2:x_1\in \mathbb{Z}\setminus\{0\} \ \text{or}\ x_2-x_1\in \mathbb{Z}\setminus\{0\}\}\nonumber\\
S_4&=\{(x_1,x_2)\in \mathbb{R}^2:x_1-\sqrt{2}x_2\in \mathbb{Z}\setminus\{0\} \ \text{or}\ (1+\sqrt{2})x_2-x_1\in \mathbb{Z}\setminus\{0\}\}.\nonumber
\end{align}
In particular, $S_1\cap S_2\cap S_3 = \Z^2\setminus\{0\}$ and hence
\[S=S_1\cap S_2\cap S_3\cap S_4=(\mathbb{Z}^2\setminus\{0\})\cap S_4=\{(x_1,0):x_1\in \mathbb{Z}\setminus\{0\}\}.\]
Define $\mu(E)=\sum_{i=1}^{4}\frac{1}{4}\mu_0(A_i(E))$ for all Borel $E$,  
by \eqref{0}, we obtain
\[\widehat{\mu}(x)=\sum_{i=1}^{4}\frac{1}{4}\widehat{\mu_0}(A_i^{-T}x)=0 \ \text{iff} \ x\in S.\]
This shows that
\[\mathcal{Z}(\widehat{\mu})\cup\{0\}=\{(x_1,0):x_1\in \mathbb{Z}\}.\]
In this example the Fourier zeros of $\mu$ form the rank-one lattice $(\Z\setminus\{0\})\times \{0\}$ in $\R^2$. Indeed, $\mu$ is absolutely continuous with respect to the Lebesgue measure. $\mu$ cannot be a spectral measure with spectrum $\Z\times \{0\}$. In this case the conclusion of Theorem \ref{spectral} is false. This shows that, by the contrapositive of Theorem \ref{spectral}, $\mu\times \nu$ is never a spectral measure for all $\nu$.
\end{example}


\subsection{Failure of weak convergence in singular measures.} \label{3.1} We say that a sequence of uniformly discrete sets $\Lambda_n$ {\it converges weakly} to $\Lambda$ if for all  $\varepsilon>0$ and for all $R>0$, there exists $N\in\N$ such that for all $n>N$, 
$$
\Lambda_n\cap B(0,R)\subset \Lambda+B(0,\varepsilon), \ \Lambda\cap B(0,R)\subset \Lambda_n+B(0,\varepsilon).
$$
An important result about weak convergence is that if $\Lambda_n$ are spectra for a spectral set $\Omega$ and $\Lambda_n$ converges to $\Lambda$ weakly, then $\Lambda$ is also a spectrum for $\Omega$ (see e.g. \cite[Lemma 3.1]{GL-JFA}). We now illustrate that this is indeed false if we consider singular measures. 

\begin{example}
Let $\mu$ be the arc-length measure on the line segment $(0,0)$ to $(1,0)$ in $\R^2$. Let ${\bf v}_n = (1,n)^T$ and then $\Lambda_n = \{{\bf v}_n k: k\in\Z\}$ are spectra of the measure $\mu$.  For any $R>0$, for all sufficiently large $n$, $\Lambda_n\cap B(0,R)$ contains only the origin, so $\Lambda_n$ weakly converges to $\Lambda = \{0\}$, which is not a spectrum. 

More generally, if a continuous measure $\mu$ admits arbitrarily sparse spectra $\Lambda_n$  such that $0\in\Lambda_n$ and 
$$
\min\{|\lambda|: \lambda\in\Lambda_n\setminus \{0\}\}\to \infty, \ \mbox{as} \ n\to\infty,
$$
then $\Lambda_n$ converges weakly to $\Lambda = \{0\}$, which is a not a spectrum for $\mu$ (since $\mu$ is continuous, so $L^2(\mu)$ is infinite dimensional). By \cite{AL-2022}, any self-affine spectral measure admits such a sequence of spectra.  
\end{example}
The above example exploited the arbitrary sparseness to construct the counterexample whose weak limit can only be a singleton. One may ask if the weak convergence result can still hold if $\Lambda_n$ does not become arbitrarily sparse. In the following,  we give a non-trivial fractal example in $\R^1$ such that the spectra $\Lambda_n$ converges weakly to an infinite set $\Lambda$ which is mutually orthogonal but it is not a spectrum. 

\begin{example}
Let $\nu$ be the one-quarter Cantor measure defined in $(\ref{jdcd})$ in the introduction. Consider 
\[
\Lambda_n=\left\{3\cdot\sum_{j=0}^{n-1}4^{j}\varepsilon_j+\sum_{j=n}^{n+M}4^{j}\varepsilon_j: \varepsilon_j\in\{0,1\}, ~ M\ge 0\right\}.
\]
For any $k\geq1$, we define
\[\Lambda_{n,k}=
 \{0,3\}+4\{0,3\}+\cdots+4^{n-1}\{0,3\}+4^{n}\{0,1\}+\cdots+4^{n+k}\{0,1\}.\]
Then $\Lambda_n=\cup_{k=1}^{\infty} \Lambda_{n,k}$. We claim that all $\Lambda_n$ are spectra for $\nu$. Note that $\Lambda_n$ converges weakly to 
the countable set 
\[\Lambda=\{0,3\}+4\{0,3\}+\cdots+4^{n}\{0,3\}+\cdots.\]
However, it is a well-known fact that  $\Lambda$ is not a spectrum for $\nu$ \cite{dense}. Therefore, weak convergence fails to preserve the spectrality of the set in the limit.

It remains to see that $\Lambda_n$ is a spectrum for $\nu$. Indeed, by Theorem $4.4$ in \cite{DHL}, it suffices to show that
\begin{equation}\label{eq-show}
    \inf_{k\geq1} \inf_{\lambda\in \Lambda_{n,k}}\left|\hat{\nu}\left(\frac{\lambda}{4^{n+k+1}}\right)\right|^{2} >0.
\end{equation}

To this end, we notice that if $\lambda\in \Lambda_{n,k}$,
we have
\[0\leq \frac{\lambda}{4^{n+k+1}}\leq\frac{\sum_{j=0}^{n-1}4^{j}3+\sum_{j=0}^{k}4^{n+j}}{4^{n+k+1}}
=\frac{2}{3\cdot 4^{k+1}}+\frac{1}{3}-\frac{1}{4^{n+k+1}}<\frac{3}{8},\]
that is, for any $k\geq1$ and $\lambda\in \Lambda_{n,k}$, we have $0\leq\frac{\lambda}{4^{n+k+1}}<\frac{3}{8}$.
Hence, for any $k\geq1$, $\lambda\in \Lambda_{n,k}$, we have using the Fourier transform formula for $\nu$ (see e.g. \cite{dense}), 
\begin{align}
\left|\widehat{\nu}\left(\frac{\lambda}{4^{n+k+1}}\right)\right|
&=\prod_{j=1}^{\infty}\left|\cos\left(2\pi \frac{\lambda}{4^{n+k+1+j}}\right) \right|\nonumber \\
&\geq\prod_{j=1}^{\infty}\left(1-\frac{1}{2}\left(2\pi\frac{\lambda}{4^{n+k+1+j}}\right)^2 \right) ~ ~~~~~(\mbox{by} ~\cos x\ge 1-x^2/2) \nonumber \\
&>\prod_{j=1}^{\infty}\left(1-\frac{1}{2}\left(\frac{3\pi}{4^{j+1}}\right)^2 \right).\nonumber
\end{align}
By taking the infimum,  (\ref{eq-show}) holds.
Thus, $\Lambda_n$ is a spectrum for $\nu$. 
\end{example}

\bibliographystyle{plain}
\bibliography{ref}

\end{document}